\newcommand{\N}{\mathbb{N}}
\newcommand{\Z}{\mathbb{Z}}
\newcommand{\Q}{\mathbb{Q}}
\newcommand{\R}{\mathbb{R}}
\newcommand{\C}{\mathbb{C}}
\newcommand{\CP}{\mathbb{CP}}
\newcommand{\ol}[1]{\overline{#1}}
\DeclareMathOperator{\tr}{tr}
\DeclareMathOperator{\grad}{grad}
\DeclareMathOperator{\Isom}{Isom}
\DeclareMathOperator{\dvol}{dvol}
\DeclareMathOperator{\Id}{Id}
\DeclareMathOperator{\Aut}{Aut}
\DeclareMathOperator{\Diffeo}{Diffeo}
\DeclareMathOperator{\conj}{conj}
\newcommand{\co}{\colon\thinspace}
\theoremstyle{plain}
\newtheorem{theorem}{Theorem}[section]
\newtheorem*{theorem*}{Theorem}  
\newtheorem{lemma}[theorem]{Lemma}
\newtheorem{proposition}[theorem]{Proposition}
\theoremstyle{definition}
\newtheorem{definition}[theorem]{Definition}
\newtheorem{notations}[theorem]{Notations and Remarks}
\theoremstyle{remark}
\newtheorem{remark}[theorem]{Remark}
\newtheorem*{remark*}{Remark}
\newcommand*{\reg}{\text{reg}} 
\newcommand*{\sing}{\text{sing}}
\newcommand*{\princ}{\text{princ}} 
\newcommand{\q}[1]{[#1]} 
\newcommand{\frakt}{\mathfrak{t}}
\newcommand{\frakh}{\mathfrak{h}}
\newcommand{\frakg}{\mathfrak{g}}
\newcommand{\fraksu}{\mathfrak{su}}
\newcommand{\fraku}{\mathfrak{u}}
\newcommand{\fraki}{\mathfrak{i}}
\newcommand{\Ad}{\operatorname{Ad}}
\newcommand{\cali}{\mathcal{I}}
\newcommand{\calj}{\mathcal{J}}
\newcommand{\calZ}{\mathcal{Z}}
\begin{document}

\title{Isospectral Alexandrov Spaces}
\author{Alexander Engel}
\address{Institut f\"ur Mathematik, Universit\"at Augsburg, 86135 Augsburg, Germany}
\email{alexander.engel@math.uni-augsburg.de}
\author{Martin Weilandt}
\address{Departamento de Matem\'atica, Universidade Federal de Santa Catarina, 88040-900 Florian\'opolis-SC, Brazil}
\email{martin.weilandt@ufsc.br}
\subjclass[2010]{Primary: 58J53, 58J50; Secondary: 53C20, 51F99}
\keywords{Alexandrov space, spectral geometry, Laplace operator, isospectrality}

\maketitle

\begin{abstract}
We construct the first non-trivial examples of compact non-isometric Alexandrov spaces which are isospectral with respect to the Laplacian and not isometric to Riemannian orbifolds. This construction generalizes independent earlier results by the authors based on Sch\"uth's version of the torus method.
\end{abstract}

\numberwithin{equation}{section}


\section{Introduction}
Spectral geometry is the study of the connection between the geometry of a space and the spectrum of the associated Laplacian. In the case of compact Riemannian manifolds the spectrum determines geometric properties like dimension, volume and certain curvature integrals (\cite{MR0282313}). Besides, various constructions have been given to find properties which are not determined by the spectrum (see \cite{MR1736857} for an overview). Many of those results could later be generalized to compact Riemannian orbifolds (see \cite{MR2985309} and the references therein). More generally, one can consider the Laplacian on compact Alexandrov spaces (by which we always refer to Alexandrov spaces with curvature bounded from below). The corresponding spectrum is also given by a sequence $0=\lambda_0\le\lambda_1\le\lambda_2\le\ldots\nearrow\infty$ of eigenvalues with finite multiplicities (\cite{MR1865418}) and we call two compact Alexandrov spaces isospectral if these sequences coincide. Although it is not known which geometric properties are determined by the spectrum in this case, one can check if constructions of isospectral manifolds (or, more generally, orbifolds) carry over to Alexandrov spaces. In this paper we observe that this is the case for the torus method from \cite{MR1895349}, which can be used to construct isospectral Alexandrov spaces as quotients of isospectral manifolds. The same idea has already been used in \cite{engel_bachelor} and \cite{weilandt10a} to construct isospectral manifolds or orbifolds out of known families of isospectral manifolds. Our main result is the following.

\begin{theorem*}
For every $n\ge 4$ there are continuous families of isospectral $2n$-di\-men\-sio\-nal Alexandrov spaces which are pairwise non-isometric and none of which is isometric to a Riemannian orbifold. Furthermore, they can be chosen not to be products of non-trivial Alexandrov spaces.
\end{theorem*}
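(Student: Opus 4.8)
The plan is to realise the desired spaces as quotients of a known continuous family of isospectral Riemannian manifolds by an isometric torus action, and to verify that Sch\"uth's torus method descends to such quotients. Concretely, I would start from one of the torus-method families $(M,g_t)_t$ of isospectral metrics on a fixed closed manifold $M$ (as produced in \cite{MR1895349}), chosen so that a torus $T$ acts effectively by isometries of every $g_t$ and so that the family meets the torus-method isospectrality hypothesis relative to $T$, the restriction $n\ge 4$ reflecting the smallest dimension in which a suitable deformation is available. Fixing a subtorus $H\le T$, I set $X_t:=(M,g_t)/H$. Since $H$ is a compact Lie group acting isometrically on a closed Riemannian manifold, each $X_t$ is a compact Alexandrov space with curvature bounded below, of dimension $\dim M-\dim H$, which I would arrange to equal $2n$. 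The subtorus $H$ (and the ambient data) must be chosen so that the $H$-action is non-free in a controlled way: singular enough to force genuinely non-orbifold points and to preclude a metric splitting, but leaving the principal stratum large enough to carry the local-geometric differences between the $g_t$.

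\emph{Isospectrality of the $X_t$.} Because $H$ acts by isometries, its Laplacian commutes with the $H$-action; $L^2(M,g_t)$ decomposes into $H$-isotypic components, and the component of $H$-invariant functions is, via the identification of the quotient Laplacian with the invariant Laplacian on $M$, the form domain of the Laplacian of $X_t$. Hence $\spec(X_t)$ is the union of those $T$-isotypic pieces of $\spec(M,g_t)$ belonging to characters of $T$ that are trivial on $H$. The torus method, however, proves isospectrality of $(M,g_s)$ and $(M,g_t)$ block by block: it intertwines, character by character, the corresponding twisted Laplacians. Restricting to the characters trivial on $H$ yields $\spec(X_s)=\spec(X_t)$ for all $s,t$.

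\emph{The geometric properties.} For pairwise non-isometry: an isometry $X_s\to X_t$ preserves the principal stratum (intrinsically the set of points whose tangent cone is isometric to $\R^{2n}$), hence restricts to an isometry of the open dense Riemannian manifolds that are the submersion quotients of the principal part of $M$; through O'Neill's formulas, together with the fact that the $H$-orbits are flat tori whose metric records the deformation parameter, a local invariant that already distinguishes the $g_t$ on a dense open subset of $M$ (the set of values attained by a suitable curvature function) descends to distinguish the $X_t$, exactly as in \cite{engel_bachelor} and \cite{weilandt10a}. For not being a Riemannian orbifold: the $H$-action is chosen so that at some orbit degeneration the tangent cone of $X_t$ is the metric product of a Euclidean factor with the metric cone over a weighted projective space with distinct weights, which is not of the form $\R^{2n}/\Gamma$ with $\Gamma<O(2n)$ finite — the only possibility for a tangent cone of a Riemannian orbifold. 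For not being a product of nontrivial Alexandrov spaces: impose a mild irreducibility condition (e.g.\ irreducible local holonomy on the principal stratum, or a non-split singular set), which only restricts the choice of $M$ and $H$.

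The step I expect to be the main obstacle is the combination of (i) making the spectral descent rigorous in the Alexandrov category — matching the Laplacian defined through the canonical Dirichlet energy on the genuinely singular $X_t$ with the $H$-invariant Laplacian upstairs, \emph{including} along the singular strata, and checking that the torus-method intertwiners preserve $H$-invariance — and (ii) exhibiting one explicit choice of $M$, $H$ and deformation for which non-isometry, non-orbifold-ness and non-product-ness hold simultaneously. Once (i) is available (it is the conceptual heart of the paper), (ii) should be a careful but essentially routine upgrade of the manifold and orbifold constructions already in the literature.
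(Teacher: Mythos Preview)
Your overall architecture---quotient a torus-method family by a commuting isometric circle action and check that the torus method descends---is exactly the paper's, and your isospectrality argument (restrict Sch\"uth's character-by-character intertwiners to the block of $T$-characters trivial on $H$) is a valid alternative to what the paper actually does. The paper instead reproves the torus method \emph{on the quotient}: it decomposes $H^1(G\backslash M)$ under the induced $T$-action and builds the intertwiners there (Theorem~\ref{theorem:torusaction}), which sidesteps the identification of the Alexandrov Laplacian on $G\backslash M$ with the $G$-invariant Laplacian upstairs that you flag as the main obstacle.

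The substantive gap is your structural choice $H\le T$. In the paper's examples $T$ is the two-torus driving Sch\"uth's $j\colon\R^2\to\mathfrak{su}(m)$ deformation, and the quotienting circle $G=S^1$ is \emph{not} a subtorus of $T$: on $S^{2m+3}\subset\C^m\times\C^2$, $G$ acts on the $\C^m$ factor and $T$ on the $\C^2$ factor. The point of keeping $G$ outside $T$ is that the full two-torus still acts effectively on $G\backslash M$, and this is precisely what the non-isometry proof (Section~\ref{sec:non_isom}) needs: the equivalence class of $j$ is recovered from the curvature form of the principal $T^2$-bundle $G\backslash\widehat M\to T\backslash(G\backslash\widehat M)$. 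With $H\le T$ only $T/H$ survives on the quotient, and with a one-dimensional residual torus that argument collapses. Your proposed substitute (``a local invariant that already distinguishes the $g_t$\dots descends'') does not work here: in these families the metrics can be locally isometric on dense open sets, and non-isometry is genuinely global.

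The other geometric claims are also handled more concretely in the paper and do not proceed as you outline. Non-orbifoldness comes from a codimension criterion (Proposition~\ref{prop:not_orbifold}): $S^1\backslash M$ is an orbifold iff the $S^1$-fixed set has codimension $\le 2$, and here it is an $S^3$ of codimension $2m\ge 6$---no tangent-cone computation with weighted projective spaces is needed (and with $G$ acting by scalars on $\C^m$ none would appear). Non-productness is not an irreducibility or holonomy condition but a curvature argument (Lemma~\ref{lem:not_product_sphere}): one first rescales all $j(t)$ so that every $g_{\kappa(t)}$, and hence by O'Neill every regular part $(S^1\backslash S^{2m+3})^{\mathrm{reg}}$, has strictly positive sectional curvature; then any splitting $A_1\times A_2$ would force one factor to be the singular set $S^3$, which contradicts positive curvature of the regular part. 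Each of these steps uses that $G$ acts on the $\C^m$ factor, not inside $T$.
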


\begin{remark*}
We also give such families in every dimension $4n-1$, $n \ge 4$, but could not establish sufficient criteria for the non-isometry of these more complicated examples.
\end{remark*}

\section{Alexandrov Spaces}
Although we will not actually work with the general definition of an Alexandrov space but mainly with the construction given in Proposition \ref{proposition:quotient}, we include it here for completeness. We follow the definition from \cite{MR1865418}, also compare \cite{MR1835418,MR1185284}.

Let $(X, d)$ be a metric space and $\gamma\co [a, b] \to X$ a continuous path. Then the \emph{length} $L(\gamma)$ of $\gamma$ is defined as $L(\gamma) := \sup_{z_0, \ldots, z_N} \sum_{i=1}^N d(\gamma(z_{i-1}), \gamma(z_i))$, where the supremum is taken over all partitions $a= z_0 \le z_1 \le \ldots \le z_N = b$ of the interval $[a, b]$. The metric space $(X, d)$ is called \emph{intrinsic} if the distance $d(x,y)$ between two points $x, y \in X$ is the infimum of the lengths of all continuous paths from $x$ to $y$.

It is known (see, e.g., \cite[Theorem 2.5.23]{MR1835418}) that in a complete, locally compact, intrinsic metric space $(X, d)$ every two points $x, y \in X$ with $d(x, y) < \infty$ are connected by a \emph{shortest path}, i.e., there exists a continuous path $\gamma\co [a, b] \to X$ with $\gamma(a) = x$, $\gamma(b) = y$ and $L(\gamma) = d(x, y)$. Note that a shortest path is not necessarily unique.

By a \emph{triangle} $\Delta x_1x_2x_3$ in a complete, locally compact, intrinsic metric space we mean a collection of three points $x_1,x_2,x_3$ connected by shortest paths (\emph{sides}) $[x_1x_2],[x_2x_3],[x_1x_3]$.

\begin{definition}
  \label{def:as}
  Let $K\in\R$. A complete, locally compact, intrinsic metric space $(X,d)$ of finite Hausdorff dimension is called an \emph{Alexandrov space of curvature $\ge K$} if in some neighbourhood $U$ of each point in $X$ the following holds: For every triangle $\Delta x_1x_2x_3$ in $U$ there is a (comparison) triangle $\Delta \bar{x}_1\bar{x}_2\bar{x}_3$ in the simply connected 2-dimensional space form $(M_K,\bar{d})$ of constant curvature $K$ with the following properties:
  \begin{itemize}
  \item The sides of $\Delta\bar{x}_1\bar{x}_2\bar{x}_3$ have the same lengths as the corresponding sides of $\Delta x_1x_2x_3$.
  \item If $y\in [x_1x_3]$ and $\bar{y}$ denotes the point on the side $[\bar{x}_1\bar{x}_3]$ with $\bar{d}(\bar{x}_1,\bar{y})=d(x_1,y)$, then $d(x_2,y)\ge \bar{d}(\bar{x}_2,\bar{y})$.
  \end{itemize}
\end{definition}

Now fix an Alexandrov space $X$ of Hausdorff dimension $n$ (which is automatically a non-negative integer). A point $x$ in $X$ is called \emph{regular} if for $\varepsilon\to 0$ the open balls $\frac{1}{\varepsilon}B_\varepsilon(x)\subset X$ converge to the unit ball in the Euclidean space $\R^n$ with respect to the Gromov-Hausdorff metric. The set $X^\reg$ of regular points is known to be dense in $X$.

Unless otherwise stated, we assume that $X^\reg$ is open in $X$. Then it is known (see \cite{kuwae07} and the references therein):

\begin{proposition}\label{proposition:alex-mf}
\begin{enumerate}[(i)]
\item $X^\reg$ has a natural $n$-dimensional $C^\infty$-manifold structure.
\item \label{item:alex-metric} There is a unique continuous Riemannian metric $h$ on $X^\reg$ such that the induced distance function $d_h$ coincides with the original metric $d$ on $X^\reg$.
\item If $X$ carries the structure of a manifold with (smooth) Riemannian metric, the $C^\infty$-structure and the continuous Riemannian metric $h$ mentioned above coincide with the given $C^\infty$-structure on $X=X^\reg$ and the given smooth Riemannian metric.
\end{enumerate}
\end{proposition}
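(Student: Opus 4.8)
\emph{Part (i).} The plan is to manufacture charts out of distance functions, following Otsu--Shioya as presented in \cite{kuwae07}. Fix $p\in X^\reg$. Since the tangent cone at $p$ is $\R^n$, the space of directions $\Sigma_p$ is a round unit sphere $S^{n-1}$, so one may choose points $q_1,\dots,q_n\in X$ whose directions at $p$ form an almost-orthonormal frame. After shrinking, $\varphi=(d(q_1,\cdot),\dots,d(q_n,\cdot))$ is then a bi-Lipschitz homeomorphism from a neighbourhood $U\ni p$ onto an open subset of $\R^n$ (the basic distance chart of Alexandrov geometry; see \cite{kuwae07} and the references therein). As $X^\reg$ is assumed open, such sets $U$ cover $X^\reg$ and exhibit it as a topological, indeed Lipschitz, $n$-manifold. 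The transition map between two such distance charts is a difference of semiconcave functions (a DC map); in particular it is locally Lipschitz and almost everywhere twice differentiable. One then upgrades this DC-atlas to a $C^\infty$-atlas, e.g.\ by mollifying the distance coordinates; this can be arranged so that the resulting $C^\infty$-structure is well-defined up to $C^\infty$-equivalence, and that is the asserted natural structure.

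\emph{Part (ii).} In a distance chart as above, define a symmetric bilinear form by $h_{ij}(x)=\langle u_i(x),u_j(x)\rangle$, where $u_i(x)\in\Sigma_x$ denotes the direction at $x$ of a shortest path to $q_i$ and $\langle\cdot,\cdot\rangle$ is the inner product on the tangent cone $T_xX$ ($=\R^n$ since $x$ is regular); equivalently, $h_{ij}$ is read off from the first-variation formula for $d(q_i,\cdot)$. The key point --- and essentially the only place where regularity of the base point is used --- is that $x\mapsto h_{ij}(x)$ extends to a \emph{continuous}, positive-definite tensor field on all of $X^\reg$; one then checks it transforms tensorially under the transition maps, so that it globalizes to a continuous Riemannian metric $h$. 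To see $d_h=d$ on $X^\reg$, one compares lengths of curves: $d_h\ge d$ is immediate from the definition of an intrinsic metric, while the reverse inequality follows by approximating $d$-shortest paths between regular points and evaluating their $h$-length via the first-variation formula. Uniqueness of $h$ is then automatic, since a continuous Riemannian metric is determined by the lengths it assigns to $C^1$-curves, hence by the identity $d_h=d$.

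\emph{Part (iii).} If $X$ already carries a smooth Riemannian metric $g$, then every point has tangent cone $\R^n$, so $X^\reg=X$; moreover each $d(q,\cdot)$ is smooth on the complement of $q$ and its cut locus, so the distance charts above are $C^\infty$-compatible with the given smooth atlas, and the mollification step does not change the $C^\infty$-structure --- hence the natural and the given $C^\infty$-structures coincide. Under this identification $h_{ij}$ equals $g_{ij}$ by the classical Riemannian first-variation formula, and $d_g$ induces $d$ by hypothesis, so the uniqueness in (ii) gives $h=g$.

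\emph{Main obstacle.} The hard part, as always in this theory, is two-fold and sits entirely in (i)--(ii): passing from the manifestly available bi-Lipschitz/DC structure to a genuine $C^\infty$-atlas, and proving that the tensor $h$ assembled from directions of shortest paths to the $q_i$ is continuous --- not merely bounded and measurable --- on all of $X^\reg$. This continuity is precisely what fails at singular points and is the technical core of the Otsu--Shioya theory; granting it, parts (ii) and (iii) are comparatively formal.
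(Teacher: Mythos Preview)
The paper does not give its own proof of this proposition: it is stated as a known fact, prefaced by ``Then it is known (see \cite{kuwae07} and the references therein)'', and no argument follows. So there is nothing to compare against; your write-up is an expansion of the citation, and in outline it does follow the Otsu--Shioya approach that the cited reference presents.

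A couple of your steps are more heuristic than the literature actually allows. In~(i), ``upgrading the DC-atlas to a $C^\infty$-atlas by mollifying the distance coordinates'' is not how the argument runs: mollifying individual charts does not by itself make the transition maps smooth, and naturality would be unclear. The route taken in the references goes through Perelman's averaged/concave distance functions (or, more bluntly, through smoothing theory for Lipschitz manifolds), and you should point to that rather than to ad hoc mollification. In~(ii), splitting $d_h=d$ into two inequalities obscures the real point: once one knows that the $h$-length of a curve agrees with its metric length (first variation), the equality of induced distances on $X^\reg$ follows directly, modulo the fact that shortest paths between nearby regular points stay in $X^\reg$ --- which is where openness of $X^\reg$ is used. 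You correctly flag the continuity of $h$ on all of $X^\reg$ as the technical heart and defer it to the literature; since the paper itself only cites the result, that is entirely appropriate here.
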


Apart from Riemannian manifolds (or, more generally, orbifolds) with sectional curvature bounded below, another way to construct Alexandrov spaces is the following proposition.

\begin{proposition}
  \label{proposition:quotient}
  Let $K\in\R$, let $(M,g)$ be a compact Riemannian manifold with sectional curvature $\ge K$ and let $G$ be a compact Lie group acting isometrically on $(M,g)$. Then the quotient space $(G\backslash M,d_g)$ (with $d_g(\q{x},\q{y})=\inf_{a\in G}d_g(x,ay)$) is an Alexandrov space of curvature $\ge K$.
\end{proposition}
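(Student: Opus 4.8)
The plan is to verify the two bullet points in Definition~\ref{def:as} directly for $X = G\backslash M$ with the quotient metric $d_g$, reducing each comparison-triangle statement to the corresponding statement for the complete Riemannian manifold $(M,g)$, where it holds by the classical Toponogov theorem (since $(M,g)$ has sectional curvature $\ge K$). First I would record the standard facts that $(G\backslash M, d_g)$ is a complete, locally compact, intrinsic (length) metric space of finite Hausdorff dimension: completeness and local compactness pass to the quotient because $G$ is compact and acts by isometries, the infimum defining $d_g$ is attained (again by compactness of $G$), and the length-space property is inherited since the projection $\pi\co M \to G\backslash M$ is $1$-Lipschitz and maps paths to paths of no greater length, while any path downstairs lifts (in length) to one upstairs. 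Finite Hausdorff dimension follows from $\dim_H(G\backslash M)\le \dim M$.

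Next comes the curvature comparison. Fix $[x_0]\in G\backslash M$ and choose $x_0\in M$ over it; let $U$ be the $\pi$-image of a sufficiently small geodesically convex ball around the $G$-orbit of $x_0$ in which Toponogov comparison holds in $M$. Given a triangle $\Delta[x_1][x_2][x_3]$ in $U$ with shortest-path sides, I would lift it to $M$: pick $x_1\in M$ over $[x_1]$; choose a shortest path in $M$ realizing $d_g([x_1],[x_2])$ and let $x_2$ be its far endpoint (so $d_M(x_1,x_2) = d_g([x_1],[x_2])$ and $\pi(x_2)=[x_2]$, after acting by a suitable element of $G$); similarly choose $x_3$ over $[x_3]$ with $d_M(x_2,x_3)=d_g([x_2],[x_3])$. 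The one subtlety is that we cannot simultaneously arrange all three pairwise distances to be realized upstairs; but we only need $d_M(x_1,x_2)=d_g([x_1],[x_2])$ and $d_M(x_2,x_3)=d_g([x_2],[x_3])$ for the vertex at $x_2$, together with $d_M(x_1,x_3)\ge d_g([x_1],[x_3])$, which always holds since $\pi$ is distance-nonincreasing. Apply Toponogov to the triangle $\Delta x_1x_2x_3$ in $M$ (enlarging the comparison side $[x_1x_3]$ if necessary to the longer value $d_M(x_1,x_3)$, which only relaxes the comparison) to produce a comparison triangle $\Delta\bar x_1\bar x_2\bar x_3$ in $M_K$; the desired inequality $d_g([x_2],[y])\ge \bar d(\bar x_2,\bar y)$ for $[y]\in[[x_1][x_3]]$ then follows from $d_g([x_2],[y])\ge d_M(x_2,y)$ for the corresponding lift $y$ on $[x_1x_3]$ and the Toponogov inequality upstairs. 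Running this for each choice of distinguished vertex (and noting the side-length condition is arranged by construction for the two sides meeting at that vertex) yields all the required inequalities, hence the Alexandrov condition.

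I expect the main obstacle to be the bookkeeping in the lifting step: ensuring that a genuine \emph{triangle} in $U$ (three points joined by shortest paths in $G\backslash M$) can be lifted so that the relevant sides become shortest paths in $M$ of the same length, while controlling that everything stays inside a region where Toponogov's theorem applies. This is where one uses that shortest paths in $G\backslash M$ lift to horizontal geodesics in $M$ of equal length (a standard fact about Riemannian submersions onto quotients, valid on the principal stratum and extendable by continuity/limiting arguments to the points we need), together with compactness of $G$ to attain all relevant infima. Everything else — completeness, local compactness, the length-space property, finite dimensionality, and the purely two-dimensional comparison-geometry manipulations in $M_K$ — is routine. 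Since the statement is quoted as a special case of \cite[Proposition~10.2.4]{MR1835418}, I would in the end simply cite that reference, but the sketch above is the argument it encapsulates.
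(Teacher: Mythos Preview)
The paper itself gives no proof of this proposition; it simply records it as a special case of \cite[Proposition~10.2.4]{MR1835418}. Your final sentence (``I would in the end simply cite that reference'') therefore matches the paper exactly. The issue is that the argument you sketch before that contains a genuine error, not just bookkeeping noise.

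The problem is in the lifting step and the direction of your key inequality. You lift the two sides meeting at $x_2$, so that $d_M(x_1,x_2)=d_g([x_1],[x_2])$ and $d_M(x_2,x_3)=d_g([x_2],[x_3])$, and then write ``$d_g([x_2],[y])\ge d_M(x_2,y)$ for the corresponding lift $y$ on $[x_1x_3]$''. But the projection $\pi$ is $1$-Lipschitz, so the inequality goes the \emph{other} way: $d_g([x_2],[y])\le d_M(x_2,y)$. Moreover, the $M$-geodesic $[x_1x_3]$ you obtain (of length $c'=d_M(x_1,x_3)\ge c$) need not project to the quotient shortest path $[[x_1][x_3]]$ at all; that quotient side lifts to a \emph{horizontal} geodesic of length $c$ from $x_1$ to some other point of the orbit $Gx_3$. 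So there is no ``corresponding lift $y$'' on your $[x_1x_3]$, and even if there were, the inequality you invoke is backwards. As written, the argument does not produce the desired lower bound on $d_g([x_2],[y])$.

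The standard fix is to lift the \emph{opposite} side first: lift the quotient shortest path $[[x_1][x_3]]$ to a horizontal geodesic $[x_1x_3]$ in $M$ of the correct length $c$, and let $y$ be the point on it over $[y]$. Now choose $x_2$ in the fiber $\pi^{-1}([x_2])$ realizing $d_M(x_2,y)=d_g([x_2],[y])$. Then $d_M(x_1,x_2)\ge a$ and $d_M(x_2,x_3)\ge b$, and Toponogov for the $M$-triangle $\Delta x_1x_2x_3$ gives $d_M(x_2,y)\ge \bar d(\bar x_2',\bar y)$ in a comparison triangle with sides $d_M(x_1,x_2),\,d_M(x_2,x_3),\,c$. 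A monotonicity lemma in $M_K$ (increasing the two sides adjacent to $\bar x_2$ while fixing the opposite side only increases the distance from $\bar x_2$ to a fixed point on that side) then yields $\bar d(\bar x_2',\bar y)\ge \bar d(\bar x_2,\bar y)$ for the comparison triangle with the correct side lengths $a,b,c$. Chaining these gives $d_g([x_2],[y])\ge \bar d(\bar x_2,\bar y)$, as required. Your intuition that ``enlarging a side only relaxes the comparison'' is correct, but it has to be applied to the sides adjacent to $x_2$, not to the side $[x_1x_3]$ carrying $y$.
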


Note that, in the setting above, the compactness of $G$ implies that a shortest path between two arbitrary points in $(G\backslash M,d_g)$ can be found by just pushing down an appropriate shortest path in $(M,g)$. For a proof of the curvature bound in the quotient, just note that, by Toponogov's theorem, $(M,g)$ is an Alexandrov space of curvature $\ge K$ and apply \cite[Proposition 10.2.4]{MR1835418} (also see \cite{MR1185284}). Moreover, note that Proposition \ref{proposition:alex-mf} directly implies that $(G\backslash M,d_g)$ has Hausdorff dimension $\dim M-\dim G$. Moreover, note that if the action is almost free (i.e., all stabilizers $G_x$, $x\in M$, are finite), then the quotient $(G\backslash M,d_g)$ above is isometric to a Riemannian orbifold (\cite{MR932463}).

\section{The Laplacian}
The Laplacian on Alexandrov spaces was introduced in \cite{MR1865418}: Assume that $X$ is a compact Alexandrov space. The Sobolev space $H^1(X,\R)\subset L^2(X,\R)$ is by definition given by all measurable real-valued functions on $X$ whose restrictions to $X^\reg$ lie in $H^1((X^\reg,h),\R)$ (with $h$ the continuous Riemannian metric from Proposition \ref{proposition:alex-mf}(\ref{item:alex-metric}), which we also denote by $\langle,\rangle$). The scalar product in $H^1(X,\R)$ is then given by $(u,v)_1:=\int_X uv+\int_{X^\reg}\langle \nabla u,\nabla v\rangle$ for $u,v\in H^1(X,\R)$, where $\nabla$ stands for the weak derivative as $L^2$-vector field. For $u,v\in H^1(X,\R)$ set
\[\mathcal{E}(u,v):=\int_{X^\reg} \langle\nabla u, \nabla v\rangle.\]
By \cite{MR1865418} there is a maximal self-adjoint operator $\Delta\co\mathcal{D}(\Delta)\to L^2(X,\R)$ such that $\mathcal{D}(\Delta)\subset H^1(X,\R)$ and $\mathcal{E}(u,v)=\int_X u\Delta v$ for $u\in H^1(X,\R),v\in\mathcal{D}(\Delta)$. The last equation implies that for $u,v\in\mathcal{D}(\Delta)$:

\begin{equation}
  \label{equation:green}
  \int_X u\Delta v=\mathcal{E}(u,v)=\mathcal{E}(v,u)=\int_X v\Delta u.
\end{equation}
Moreover, by \cite{MR1865418} there is an orthonormal basis $(\phi_k)_{k\ge 0}$ of $L^2(X,\R)$ and a sequence $0\le\lambda_0\le\lambda_1\le\lambda_2\le\ldots\nearrow\infty$ such that each $\phi_k$ is locally H\"older continuous, lies in $\mathcal{D}(\Delta)$ and $\Delta\phi_k=\lambda_k\phi_k$.

Since the torus method is based on representation theory on complex Hilbert spaces, we will need the $\C$-linear extensions of $\nabla$ and $\Delta$ given by the complex gradient $\nabla^\C$ on $H^1(X):=H^1(X,\R)\otimes\C$ and the complex Laplacian $\Delta^\C$ on $\mathcal{D}(\Delta^\C):=\mathcal{D}(\Delta)\otimes\C$. We also extend $\langle,\rangle$ to a Hermitian form $\langle,\rangle_\C$ on complex-valued $L^2$-vector fields. \eqref{equation:green} then implies $\int_X\langle\nabla^\C u,\nabla^\C v\rangle_\C=\int_Xu\overline{\Delta^\C v}$ for $u,v\in\mathcal{D}(\Delta^\C)$. Using the existence of the orthonormal basis $(\phi_k)_{k\ge 0}$ of $L^2(X,\C)$, we obtain the following variational characterization of eigenvalues:
\begin{equation}
\label{eq:variational_characterization_eigenvalues}
\lambda_k=\inf_{U\in L_k}\sup_{f\in U\setminus\{0\}}\frac{\int \langle \nabla^\C f,\nabla^\C f\rangle_\C}{\int |f|^2}
\end{equation}
with $k\ge 0$ and $L_k$ the set of $k$-dimensional subspaces of $H^1(X)$ (compare the manifold setting in \cite{MR861271}).

We will call two compact Alexandrov spaces $X_1$, $X_2$ \emph{isospectral} if the corresponding sequences $(\lambda_k(X_1))_{k\ge0},(\lambda_k(X_2))_{k\ge0}$ of eigenvalues (repeated according to their multiplicities) coincide.

\section{The Torus Method on Alexandrov Spaces}
\label{section:isometrics}
Let $G$ be a compact, connected Lie group acting isometrically and effectively on a compact, connected, smooth Riemannian manifold $(M,g)$. The Riemannian metric $g$ induces a distance function $d_g$ on $M$, which in turn induces a distance function on $G\backslash M$. The latter will be denoted by $d_g$, too (compare Proposition \ref{proposition:quotient}). The union $M_{\text{princ}}$ of principal $G$-orbits is open and dense in $M$ and $G\backslash M_{\text{princ}}$ is a connected manifold (\cite{alexandrino09}). Note that if $G$ is abelian (as in our examples) then $M_{\text{princ}}$ is simply the set of points in $M$ with trivial stabilizer. Also note that $g$ induces a Riemannian submersion metric $h$ on the quotient $G\backslash M_{\text{princ}}$, which coincides with the continuous Riemannian metric induced on $G\backslash M_{\text{princ}}$ when considered as the regular part of the Alexandrov space $(G\backslash M,d_g)$ (Proposition \ref{proposition:alex-mf}).

Now let $T$ be a non-trivial torus (i.e., a compact, connected, abelian Lie group) acting effectively and isometrically on $(M,g)$ such that the $G$- and $T$-actions commute and such that the induced $T$-action on $G\backslash M$ is also effective. It follows that $M_{\text{princ}}$ is $T$-invariant and this induced $T$-action is isometric on $(G\backslash M_{\text{princ}}, h)$. By $\widehat{M}$ denote the set of all $x \in M_{\text{princ}}$ for which $\q{x}\in G\backslash M_{\text{princ}}$ has trivial $T$-stabilizer. Since $G\backslash \widehat{M}$ is open and dense in $G\backslash M_{\text{princ}}$ and the projection $\xi\co M_\princ\to G\backslash M_\princ$ is a submersion, $\widehat{M}$ is open and dense in $M_{\text{princ}}$ and hence also open and dense in $M$.

In the theorem below and later on we will also use the following notation: Let ${\mathfrak t} =T_eT$ denote the Lie algebra of $T$. Setting ${\mathcal L}=\ker(\exp\co{\mathfrak t}\to T)$, we observe that $\exp$ induces an isomorphism from ${\mathfrak t}/{\mathcal L}$ to $T$. Let ${\mathcal L}^*:=\{\phi\in {\mathfrak t}^*;~\phi(X)\in\Z~\forall X\in{\mathcal L}\}$ denote the dual lattice.
If $W$ is a subtorus of $T$ and $h$ the Riemannian submersion metric on $G\backslash \widehat{M}$ induced from $(\widehat{M}, g)$, we write $h^W$ to denote the induced Riemannian submersion metric on $W\backslash (G\backslash \widehat{M})$. Moreover, given some Riemannian metric $g$ on a manifold, we write $\dvol_g$ for the Riemannian density associated with $g$.

The original version of the following theorem (without $G$-action) was given in \cite{MR1895349}, also compare \cite{weilandt10a} for the case of an almost free $G$-action:

\begin{theorem}
  \label{theorem:torusaction}
  Let $G$ and $T$ be two compact, connected Lie groups acting effectively and isometrically on two compact Riemannian manifolds $(M,g)$ and $(M^\prime,g^\prime)$. Let $h$ and $h^\prime$ denote the induced Riemannian metrics on $G\backslash M_\princ$ and $G\backslash M^\prime_\princ$, respectively, and assume that $G$- and $T$-actions on each manifold commute and the induced isometric $T$-actions on $(G\backslash M_\princ,h)$ and $(G\backslash M^\prime_\princ,h^\prime)$ are effective. Moreover, assume that for every subtorus $W$ of $T$ of codimension 1 there is a $G$- and $T$-equivariant diffeomorphism $E_W\co M\to M^\prime$ which satisfies $E_W^*\dvol_{g^\prime}=\dvol_g$ and induces an isometry between the manifolds $(W\backslash (G\backslash \widehat{M}),h^W)$ and $(W\backslash (G\backslash \widehat{M}^\prime),{h^\prime}^W)$. Then the Alexandrov spaces $(G\backslash M,d_g)$ and $(G\backslash M^\prime,d_{g^\prime})$ are isospectral.
\end{theorem}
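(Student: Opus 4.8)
The plan is to adapt Sch\"uth's torus method from the Riemannian setting to the Alexandrov quotient $(G\backslash M, d_g)$ by exploiting the fact that the spectrum of the Alexandrov Laplacian on $G\backslash M$ can be computed from $G$-invariant data on $M$. First I would observe that, via the standard correspondence between eigenfunctions on $G\backslash M$ and $G$-invariant eigenfunctions on $M$ (and likewise between the Sobolev/Dirichlet form data), the eigenvalue problem on $(G\backslash M, d_g)$ is equivalent to the eigenvalue problem for the restriction of the Laplacian of $(M,g)$ to the closed subspace $L^2(M)^G$ of $G$-invariant functions; this uses that $M_\princ$ (resp.\ $G\backslash M_\princ$) is open and dense and of full measure, so integrals over the Alexandrov space $G\backslash M$ agree with integrals of $G$-invariant functions over $M$ up to the constant $\vol(G)$, and the continuous Riemannian metric $h$ on $(G\backslash M)^\reg$ is exactly the submersion metric. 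Combined with the variational characterization \eqref{eq:variational_characterization_eigenvalues}, this reduces the theorem to showing that $(M,g)$ and $(M',g')$ have the same $G$-invariant spectrum.

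Next, because the $T$-action on $M$ commutes with the $G$-action and descends effectively to $G\backslash M$, I would decompose $L^2(M)^G$ under the $T$-action into its isotypic components indexed by the characters $\mu \in \mathcal{L}^*$: writing $L^2(M)^G = \bigoplus_{\mu} \mathcal{H}_\mu$, where $\mathcal{H}_\mu$ consists of $G$-invariant functions $f$ with $f(t^{-1}x) = e^{2\pi i \mu(X)} f(x)$ for $t = \exp X$. The Laplacian preserves each $\mathcal{H}_\mu$ since it is $T$-invariant, so the $G$-invariant spectrum of $(M,g)$ is the union (with multiplicity) over all $\mu \in \mathcal{L}^*$ of the spectra $\spec_\mu(M,g)$ of $\Delta$ restricted to $\mathcal{H}_\mu$. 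It therefore suffices to produce, for each fixed $\mu \in \mathcal{L}^*$, a unitary isomorphism $\mathcal{H}_\mu(M) \to \mathcal{H}_\mu(M')$ intertwining the two Laplacians. For $\mu = 0$ the space $\mathcal{H}_0$ consists of $T$-invariant (and $G$-invariant) functions, which are pulled back from $T\backslash(G\backslash \widehat M)$; choosing any codimension-one subtorus $W$, the map $E_W$ induces an isometry on the relevant quotient, and since $E_W$ preserves the volume density and is $T$-equivariant, it intertwines the Laplacians on the $\mu = 0$ pieces. For $\mu \ne 0$ one picks the codimension-one subtorus $W = W_\mu$ whose Lie algebra is $\ker \mu$ (so that $\mu$ factors through $T/W_\mu$), and uses the $G$- and $T$-equivariant, volume-preserving diffeomorphism $E_{W_\mu}$ together with the isometry it induces on $(W_\mu\backslash(G\backslash\widehat M), h^{W_\mu})$: a function in $\mathcal{H}_\mu$ transforms under a one-dimensional character of $T$, so $|f|^2$ is $T$-invariant and hence descends to $W_\mu\backslash(G\backslash\widehat M)$, and one checks that $f \mapsto f\circ E_{W_\mu}^{-1}$ (after the standard modification turning it into a unitary map, cf.\ Sch\"uth) carries $\mathcal{H}_\mu(M)$ isometrically onto $\mathcal{H}_\mu(M')$ and commutes with $\Delta$; the Dirichlet energy is computed on $\widehat M$, which is $G$- and $T$-invariant of full measure, so the gradient term splits into the ``horizontal'' part controlled by $h^{W_\mu}$ and the ``vertical'' (fiber) part, both of which are preserved.

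The main obstacle I expect is the analytic bookkeeping at the boundary between the smooth and singular parts: unlike in the orbifold case one cannot appeal to a global smooth structure on $G\backslash M$, so one must be careful that (a) the Dirichlet form $\mathcal E$ on $(G\backslash M, d_g)$ really is computed by integrating $G$-invariant gradients over $M_\princ$ (equivalently over $\widehat M$), using that the singular set has measure zero and that $H^1$ of the Alexandrov space is defined via restriction to the regular part, and (b) the decomposition of $L^2(M)^G$ into $T$-isotypic components, and the splitting of the gradient into horizontal and vertical components along the $T$-orbits, are valid on the dense open set $\widehat M$ and pass to $L^2$/$H^1$ limits without picking up contributions from the complement. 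Once these foundational points are in place, the argument is formally parallel to the proof of the torus method theorem in \cite{MR1895349} (and its almost-free-$G$ variant in \cite{weilandt10a}), with ``smooth eigenfunctions on the orbifold $G\backslash M$'' replaced throughout by ``$G$-invariant elements of $\mathcal D(\Delta)$ on $M$'' and with the Alexandrov variational characterization \eqref{eq:variational_characterization_eigenvalues} used to conclude equality of the spectra from the existence of these Laplacian-intertwining unitary maps $\mathcal H_\mu(M)\to\mathcal H_\mu(M')$ for every $\mu\in\mathcal L^*$.
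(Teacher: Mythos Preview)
Your overall strategy---decompose under the $T$-action and, for each character $\mu$, use the diffeomorphism $E_{W_\mu}$ associated with the subtorus $W_\mu$ whose Lie algebra is $\ker\mu$---is exactly the one the paper uses. The genuine problem is your first step, lifting from $G\backslash M$ to $M$.

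You claim that the eigenvalue problem on $(G\backslash M,d_g)$ is equivalent to the restriction of $\Delta_M$ to $L^2(M)^G$, because ``integrals over the Alexandrov space $G\backslash M$ agree with integrals of $G$-invariant functions over $M$ up to the constant $\vol(G)$.'' This is false in the generality of the theorem (and already in the paper's own examples, where $G=S^1$ acts on $S^{2m+3}$ with orbit length $2\pi\|u\|$ varying over the sphere). For a Riemannian submersion $\xi\co M_\princ\to G\backslash M_\princ$ and a $G$-invariant function $\tilde u=u\circ\xi$ one has
\[
\int_{M_\princ}|\tilde u|^2\,\dvol_g=\int_{G\backslash M_\princ}|u|^2\,\vol(G x)\,\dvol_h,
\]
and similarly for the Dirichlet integrand, so the Rayleigh quotient of $\tilde u$ on $M$ is a \emph{weighted} Rayleigh quotient on $G\backslash M$ with weight $\vol(Gx)$, not the unweighted one defining the Alexandrov Laplacian. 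Equivalently, $\Delta_M(\tilde u)$ and $(\Delta_{G\backslash M}u)\circ\xi$ differ by a term involving the mean curvature of the $G$-orbits. Hence the $G$-invariant spectrum of $(M,g)$ and the spectrum of $(G\backslash M,d_g)$ do not coincide in general, and your reduction ``it suffices to show $(M,g)$ and $(M',g')$ have the same $G$-invariant spectrum'' is not valid. The maps $E_W$ are moreover only diffeomorphisms of $M$, not isometries, so there is no reason they should intertwine the full Laplacians $\Delta_M,\Delta_{M'}$ even on the $G$-invariant pieces.

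The paper sidesteps this entirely by working directly on the quotient: it decomposes $H^1(G\backslash M)$ (not $L^2(M)^G$) into $T$-isotypic pieces $H_\mu$, groups these as $H_W=\bigoplus_{\ker\mu\supset T_eW}H_\mu$, and shows that the induced map $F_W\co G\backslash M_\princ\to G\backslash M'_\princ$ pulls back $H'_W$ to $H_W$ preserving both the $L^2$-norm and the $H^1$-norm. The $L^2$-preservation comes from $F_W^*\dvol_{h'}=\dvol_h$, and the gradient norms match because $\overline F_W$ is an isometry on the $W$-quotients; then the variational characterization \eqref{eq:variational_characterization_eigenvalues} finishes the argument. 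If you simply move your entire $T$-decomposition and $E_W$-argument from $M$ down to $G\backslash M$ (replacing $\mathcal H_\mu\subset L^2(M)^G$ by $H_\mu\subset H^1(G\backslash M)$ and $E_W$ by the induced $F_W$), you recover essentially the paper's proof.
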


\begin{proof}
We simply imitate the proof for the case of a trivial $G$ in \cite[Theorem 1.4]{MR1895349}: Consider the (complex) Sobolev spaces $H:=H^1(G\backslash M,d_g)$ and $H^\prime:=H^1(G\backslash M^\prime,d_{g^\prime})$. To construct an isometry $H^\prime\to H$ preserving $L^2$-norms, consider the unitary representation of $T$ on $H$ given by $(zf)(\q{x}):= f(z\q{x})$ for $z\in T$, $f\in H$, $\q{x}\in G\backslash M$. Then the $T$-module $H$ decomposes into the Hilbert space direct sum
\[H=\bigoplus_{\mu\in{\mathcal L}^*}H_\mu\]
of $T$-modules $H_\mu=\{f\in H;~ [Z] f=e^{2\pi i \mu(Z)}f~\forall Z\in {\mathfrak t}\}$. Note that $H_0$ is just the space of $T$-invariant functions in $H$.

For each subtorus $W$ of $T$ of codimension 1 set
\[S_W:=\bigoplus_{\substack{\mu\in{\mathcal L}^*\setminus\{0\}\\ T_eW=\ker\mu}}H_\mu\]
and denote the (Hilbert) sum over all these subtori by $\bigoplus_W$. We obtain
\[H=H_0\oplus \bigoplus_{\mu\in{\mathcal L}^*\setminus\{0\}}H_\mu=H_0\oplus\bigoplus_W S_W.\]
Moreover, set 
\[H_W:=H_0\oplus S_W =\bigoplus_{\substack{\mu\in{\mathcal L}^*\\ T_eW\subset \ker\mu}}H_\mu\]
and note that $H_W$ consists precisely of the $W$-invariant functions in $H$.

Now use the analogous notation $H^\prime_\mu, S^\prime_W, H^\prime_W$ for the corresponding subspaces of $H^\prime$. Fix a subtorus $W$ of $T$ of codimension 1 and let $E_W\co M\to M^\prime$ be the corresponding diffeomorphism from the assumption. $E_W$ induces a $T$-equivariant diffeomorphism $F_W\co G\backslash M_\princ\to G\backslash M_\princ^\prime$, whose pull-back $F_W^*$ maps $H_0^\prime$ to $H_0$ and $H_W^\prime$ to $H_W$.
We will show that $F_W^*\co H_W^\prime\to H_W$ is a Hilbert space isometry preserving the $L^2$-norm. It obviously preserves the $L^2$-norm because $F_W^*\dvol_{h^\prime}=\dvol_h$ on $G\backslash M_{\text{princ}}$.

Let $\psi\in C^\infty(G\backslash \widehat{M}^\prime)$ be invariant under $W$, let $\q{y}\in G\backslash \widehat{M}^\prime$ and set $\phi=\psi\circ F_W$, $\q{x}:=F_W^{-1}(\q{y})\in G\backslash \widehat{M}$. Since $\grad\phi$ and $\grad\psi$ (where $\grad$ denotes the smooth gradient, which coincides almost everywhere with $\nabla$) are $W$-horizontal vector fields on $G\backslash \widehat{M}$, $G\backslash \widehat{M}^\prime$, respectively, and the map $\overline{F}_W\co (W\backslash (G\backslash \widehat{M}),h^W)\to(W\backslash (G\backslash \widehat{M}^\prime),{h^\prime}^W)$ induced by $F_W$ is an isometry, we obtain $\|\grad\phi(\q{x})\|_h=\|\grad\psi(\q{y})\|_{h^\prime}$.
Since $G\backslash \widehat{M}$ is dense in $(G\backslash M)^\reg=G\backslash M_{\text{princ}}$ and $G\backslash \widehat{M^\prime}$ is dense in $(G\backslash M^\prime)^\reg=G\backslash M^\prime_\princ$, this implies that $F_W^*\co H_W^\prime\to H_W$ is a Hilbert space isometry with respect to the $H^1$-product. Since the map $F_W^*\co H_W^\prime\to H_W$ is a Hilbert space isometry preserving $L^2$-norms, so is its restriction ${F_W^*}|_{S_W^\prime}\co S_W^\prime\to S_W$. 

But these maps for all subtori $W\subset T$ of codimension 1 give an isometry from $\bigoplus_W S_W^\prime$ to $\bigoplus_W S_W$ preserving $L^2$-norms. Choosing an isometry $H_0^\prime\to H_0$ given by an arbitrary $F_W^*$, we obtain an $L^2$-norm-preserving isometry $H^\prime\to H$. Isospectrality of $(G\backslash M,d_g)$ and $(G\backslash M^\prime,d_{g^\prime})$ now follows from \eqref{eq:variational_characterization_eigenvalues}.
\end{proof}

We now fix a Riemannian manifold $(M,g_0)$, a Lie group $G$ and a non-trivial torus $T$ as above. If $Z\in{\mathfrak t}$, we write $Z^\#_x:=\tfrac{d}{dt}|_{t=0}\exp(tZ)x$ for the fundamental vector field on $M$ induced by $Z$. We also write $Z^*$ for the corresponding fundamental vector field on $G\backslash M_{\text{princ}}$. We will need the following notation based on ideas in \cite[1.5]{MR1895349}:

\begin{notations}\label{notations:one_forms}

\begin{enumerate}[(i)]
 \item \label{enumremark:admissible} A ${\mathfrak t}$-valued $1$-form on $M$ is called \emph{admissible} if it is $G$- and $T$-invariant and horizontal with respect to both $G$ and $T$. (The latter condition means that the form vanishes on the fundamental vector fields induced by the $G$- or the $T$-action.)
 \item Now fix an admissible ${\mathfrak t}$-valued $1$-form $\kappa$ on $M$ and denote by $g_\kappa$ the $G$- and $T$-invariant Riemannian metric on $M$ given by
\[g_\kappa(X,Y):={g_0}(X+\kappa(X)^\#,Y+\kappa(Y)^\#)\]
for $X,Y\in{\mathcal V}(M)$. It has been noted in \cite{MR1895349} that $\dvol_{g_{\kappa}}=\dvol_{g_{0}}$.

\item \label{notations:one_forms-iii} Since $\kappa$ is admissible, it induces a $\mathfrak{t}$-valued 1-form $\lambda$ on $G\backslash M_\princ$, which is $T$-invariant and $T$-horizontal. Now let $h_0$ denote the Riemannian submersion metric on $G\backslash M_\princ$ induced by $g_0$. Recalling that $\xi\co M_\princ\to G\backslash M_\princ$ denotes the quotient map and letting tildes denote horizontal lifts to $M_\princ$ with respect to $g_\kappa$, we can determine the Riemannian submersion metric on $G\backslash M_\princ$ induced by $g_\kappa$ as
\begin{align*}
  (X,Y)\mapsto g_\kappa(\widetilde{X},\widetilde{Y})&=g_0(\widetilde{X}+\kappa(\widetilde{X})^\#,\widetilde{Y}+\kappa(\widetilde{Y})^\#)\\
  &=h_0(\xi_\ast \widetilde{X}+\xi_\ast(\kappa(\widetilde{X})^\#),\xi_\ast \widetilde{Y}+\xi_\ast(\kappa(\widetilde{Y})^\#))\\
  &=h_0(X+\lambda(X)^\ast,Y+\lambda(Y)^\ast)=:h_\lambda(X,Y).
\end{align*}
(To get from the first to the second line, we used that, since $\widetilde{X}$ is horizontal with respect to $g_\kappa$, the field $\widetilde{X}+\kappa(\widetilde{X})^\#$ is horizontal with respect to $g_0$. Analogously for $Y$.)

\item Since $\lambda$ and $h_0$ on $G\backslash M_{\text{princ}}$ are $T$-invariant, so is the metric $h_\lambda$ on $G\backslash M_{\text{princ}}$.

\item Note that for every $[x]\in G\backslash \widehat{M}$ the metric $h_\lambda$ on $T_{[x]}(G\backslash \widehat{M})$ restricts to the same metric as $h_0$ on the vertical subspace ${\mathfrak t}_{[x]}=\{Z^*_{[x]};~Z\in{\mathfrak t}\}\subset T_{[x]}(G\backslash \widehat{M}$), because $\lambda$ is $T$-horizontal. Also note that $h_0^T$ and $h_\lambda^T$ coincide on $T\backslash (G\backslash \widehat{M})$. 
\label{notations:one_forms_v}
\end{enumerate}

\end{notations}

The proof of the following theorem is based on \cite[Theorem 1.6]{MR1895349}.

\begin{theorem}

  \label{theorem:isospect}
  Let $(M,g_0)$ be a compact Riemannian manifold and let $G$, $T$ be compact Lie groups acting isometrically on $(M,g_0)$ such that the $G$- and $T$-actions commute. Moreover, assume that $T$ is abelian and acts effectively on $G\backslash M$. Let $\kappa$, $\kappa^\prime$ be two admissible $\mathfrak{t}$-valued $1$-forms on $M$ satisfying: 

For every $\mu\in{\mathcal L}^*$ there is a $G$- and $T$-equivariant $E_\mu\in\Isom(M,g_0)$ such that
\begin{equation}
\label{equation:Fmu}
\mu\circ\kappa=E_\mu^*(\mu\circ\kappa^\prime).
\end{equation}

Then $(G\backslash M,d_{g_\kappa})$ and $(G\backslash M,d_{g_{\kappa^\prime}})$ are isospectral Alexandrov spaces.
\end{theorem}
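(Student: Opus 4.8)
The plan is to reduce Theorem \ref{theorem:isospect} to Theorem \ref{theorem:torusaction}. That is, for every codimension-1 subtorus $W$ of $T$ we must produce a $G$- and $T$-equivariant diffeomorphism $E_W \co (M,g_\kappa) \to (M,g_{\kappa'})$ with $E_W^* \dvol_{g_{\kappa'}} = \dvol_{g_\kappa}$ inducing an isometry between $(W\backslash(G\backslash\widehat M), h_\lambda^W)$ and $(W\backslash(G\backslash\widehat M), h_{\lambda'}^W)$; isospectrality then follows immediately. The point is that a codimension-1 subtorus $W$ has $T_eW = \ker\mu$ for some primitive $\mu \in \mathcal L^*$, and the hypothesis \eqref{equation:Fmu} supplies exactly one isometry $E_\mu$ of $(M,g_0)$ attached to that $\mu$; I would set $E_W := E_\mu$ and check it does the job for the metrics $g_\kappa$, $g_{\kappa'}$ (not merely $g_0$).

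First I would record that since $\kappa$ is admissible and $E_\mu$ is $G$- and $T$-equivariant, the pullback $E_\mu^*\kappa'$ is again admissible, so $E_\mu^* g_{\kappa'} = g_{E_\mu^*\kappa'}$; thus the claim about volumes is automatic from $\dvol_{g_\kappa} = \dvol_{g_0} = \dvol_{g_{E_\mu^*\kappa'}}$, noted in Notations \ref{notations:one_forms}(ii). Next, $E_\mu$ descends to a $T$-equivariant diffeomorphism $F_\mu \co G\backslash M_\princ \to G\backslash M_\princ$ carrying $\widehat M$ to $\widehat M$, and since $W$-invariance is a $T$-equivariant condition, $F_\mu$ further descends to a diffeomorphism $\overline F_\mu \co W\backslash(G\backslash\widehat M) \to W\backslash(G\backslash\widehat M)$. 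It remains to show $\overline F_\mu^*\, h_{\lambda'}^W = h_\lambda^W$. For this I would compute directly on $W$-horizontal tangent vectors of $G\backslash\widehat M$: using $h_\lambda(X,Y) = h_0(X+\lambda(X)^*, Y+\lambda(Y)^*)$ and the fact that $F_\mu$ is an $h_0$-isometry (as $E_\mu$ is a $g_0$-isometry), the pullback metric $F_\mu^* h_{\lambda'}$ equals $h_{F_\mu^*\lambda'}$ on $G\backslash M_\princ$, where $F_\mu^*\lambda'$ is the 1-form induced by $E_\mu^*\kappa'$. So the whole question becomes: do $\lambda$ and $F_\mu^*\lambda'$ induce the same metric after restricting to $W$-horizontal vectors and dividing by $W$? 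The $W$-horizontal subspace is characterized by the vanishing of $\mu \circ \lambda$ composed with the projection onto $T_eW^\perp$ — more precisely, on such vectors only the component of the 1-form along $\mu$ survives in the relevant direction — and by hypothesis $\mu\circ\lambda = \mu\circ(F_\mu^*\lambda') = F_\mu^*(\mu\circ\lambda')$, which descends from \eqref{equation:Fmu}.

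The main obstacle I expect is the bookkeeping in this last step: making precise the assertion that, on the quotient $W\backslash(G\backslash\widehat M)$, the submersion metrics $h_\lambda^W$ and $h_{\lambda'}^W$ depend on $\lambda$, $\lambda'$ \emph{only through} the $\R$-valued 1-forms $\mu\circ\lambda$, $\mu\circ\lambda'$. Concretely, a $W$-horizontal vector on $G\backslash\widehat M$ can still have a nonzero $\frakt$-vertical component (namely along the $\mu$-direction complementary to $T_eW$), and one has to see that the contribution of $\lambda$ to $h_\lambda$ on such a vector is governed by $\mu\circ\lambda$ alone, because the $T_eW$-components of $\lambda(X)^*$ are killed upon passing to $W\backslash(G\backslash\widehat M)$ while the remaining component is a multiple of the fixed generator dual to $\mu$. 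This is exactly the computation carried out in \cite[proof of Theorem 1.6]{MR1895349} for trivial $G$, and the presence of $G$ changes nothing essential once one works on $G\backslash\widehat M$ with the submersion metric $h_0$ in place of $g_0$; I would cite that argument and indicate the one-line modifications. Finally, density of $G\backslash\widehat M$ in $(G\backslash M)^\reg$ (as used in the proof of Theorem \ref{theorem:torusaction}) ensures the isometry on the dense open part suffices, and Theorem \ref{theorem:torusaction} then yields isospectrality of $(G\backslash M, d_{g_\kappa})$ and $(G\backslash M, d_{g_{\kappa'}})$.
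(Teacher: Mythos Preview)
Your proposal is correct and follows essentially the same route as the paper's proof: both reduce to Theorem~\ref{theorem:torusaction} by setting $E_W := E_\mu$ for any $\mu \in \mathcal L^*$ with $\ker\mu = T_eW$, deduce the volume condition from $\dvol_{g_\kappa} = \dvol_{g_0}$, pass to the induced $h_0$-isometry $F_\mu$ on $G\backslash\widehat M$ satisfying $\mu\circ\lambda = F_\mu^*(\mu\circ\lambda')$, and then invoke the $W$-horizontal computation from \cite[proof of Theorem~1.6]{MR1895349} to conclude that $F_\mu$ descends to an isometry of the $W$-quotients. The paper makes this last step slightly more explicit by writing down the vectors $X = V + \lambda(V)^*$ and $Y = {F_\mu}_*X - \lambda'({F_\mu}_*X)^*$ and checking $\|Y\|_{h_{\lambda'}} = \|V\|_{h_\lambda}$, whereas you phrase it as $h_\lambda^W$ depending on $\lambda$ only through $\mu\circ\lambda$, but these are the same argument.
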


\begin{proof}
  To apply Theorem \ref{theorem:torusaction} let $W$ be a subtorus of $T$ of codimension $1$ and choose $\mu\in{\mathcal L}^*$ such that $\ker\mu=T_eW$. By assumption, there is $E_\mu\in\Isom(M,g_0)$ satisfying \eqref{equation:Fmu}. We will show that $E_W:=E_\mu$ satisfies the conditions of Theorem \ref{theorem:torusaction}. Since $E_\mu$ is an isometry on $(M,g_0)$, the remarks above imply $E_\mu^*\dvol_{g_\kappa^\prime}=\dvol_{g_\kappa}$. Let $\lambda,\lambda^\prime$ denote the $\mathfrak{t}$-valued $1$-forms on $G\backslash \widehat{M}$ induced by $\kappa$, $\kappa^\prime$, respectively. Recalling from \ref{notations:one_forms}(\ref{notations:one_forms-iii}) that the Riemannian submersion metrics on $G\backslash \widehat{M}$ induced by $g_\kappa$ and $g_{\kappa^\prime}$ are just $h_\lambda$ and $h_{\lambda^\prime}$, respectively, we are left to show  that $E_\mu$ induces an isometry between $(W\backslash (G\backslash \widehat{M}),h_\lambda^W)$ and $(W\backslash (G\backslash \widehat{M}),h_{\lambda^\prime}^W)$.

Let $F_\mu$ be the $T$-equivariant isometry on $(G\backslash \widehat{M},h_0)$ induced by $E_\mu$. Then \eqref{equation:Fmu} implies $\mu\circ\lambda=F_\mu^*(\mu\circ\lambda^\prime)$. Let $[x]\in G\backslash \widehat{M}$, let $V\in T_{[x]}(G\backslash \widehat{M})$ be $W$-horizontal with respect to $h_\lambda$ and set $X:=V+\lambda(V)^*_{[x]}$, $Y:={F_\mu}_*X-\lambda^\prime({F_\mu}_*X)^*_{F_\mu([x])}$. As in the proof of \cite[Theorem 1.6]{MR1895349} (also compare \cite{weilandt10a}) our choice of $\mu$ and the relation $\mu\circ\lambda=F_\mu^*(\mu\circ\lambda^\prime)$ imply that $Y$ is the $W$-horizontal component of ${F_\mu}_*V$ with respect to $h_{\lambda^\prime}$. Since $\|Y\|_{h_{\lambda^\prime}}=\|{F_\mu}_*X\|_{h_0}=\|X\|_{h_0}=\|V\|_{h_\lambda}$, we conclude that $F_\mu$ indeed induces an isometry $(W\backslash (G\backslash \widehat{M}),h_{\lambda}^W)\to(W\backslash (G\backslash \widehat{M}),h_{\lambda^\prime}^W)$.
\end{proof}

\section{Examples}\label{sec:examples}
The examples of isospectral bad orbifolds in \cite{weilandt10a} can be seen as an application of Theorem \ref{theorem:isospect}. However, in  this section we use this theorem to construct isospectral Alexandrov spaces which are not isometric to Riemannian orbifolds. 

We give examples of isospectral quotients of spheres (Section \ref{subsec:quotients_spheres}) and isospectral quotients of Stiefel manifolds (Section \ref{subsec:quotients_stiefel}). The former turn out to be a special case of the latter (see Remark \ref{rem:spheres_special_case}). In each construction we first give families of isospectral metrics on a manifold $M$ using ideas from \cite{MR1895349} and \cite{MR2008331} (which have been inspired by related constructions in \cite{gordon2001}) and then observe that taking appropriate $S^1$-quotients gives isospectral families of Alexandrov spaces homeomorpic to $S^1\backslash M$.


The non-isometry (under certain conditions) will be shown in Section \ref{sec:non_isom} only for the examples in Section \ref{subsec:quotients_spheres}. The fact that our examples are not orbifolds will for all examples follow from the following proposition (\cite[Proposition 6.8]{alexandrino2013}):

\begin{proposition}\label{prop:not_orbifold}
Let $S^1$ act effectively and isometrically on a connected Riemannian manifold $M$. Then $S^1\backslash M$ is isometric to a Riemannian orbifold if and only if the set of points fixed by the whole group $S^1$ is empty or has codimension $2$ in $M$.
\end{proposition}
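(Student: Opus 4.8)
The plan is to analyze the orbit-type stratification of the $S^1$-action on $M$ and relate the existence of $0$-dimensional (i.e.\ fixed) orbits to the local structure of the quotient $S^1\backslash M$ near the image of the fixed-point set. Recall that for an isometric circle action the fixed-point set $M^{S^1}$ is a disjoint union of totally geodesic submanifolds, and in a slice (normal) direction at a fixed point $p$ the action is a linear orthogonal representation of $S^1$ on the normal space $N_pM^{S^1}$. The key dichotomy is whether this normal slice representation, as a real representation of $S^1$, is a sum of nontrivial $2$-dimensional rotation modules only, or whether it contains a trivial summand — but at a point of the fixed-point component the whole normal space is by definition acted on with no trivial summand (otherwise $p$ would not be isolated in $M^{S^1}$ transversally), so the normal slice representation is $\C^{m}$ with $S^1$ acting by $z\cdot(v_1,\dots,v_m)=(z^{a_1}v_1,\dots,z^{a_m}v_m)$ with all $a_j\neq 0$, where $2m$ is the codimension of that fixed-point component.

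The first thing I would do is reduce to a purely local statement: whether $S^1\backslash M$ is isometric to a Riemannian orbifold is a local condition (being an orbifold near each point), and on the open dense set where the action is free or has finite stabilizers the quotient is automatically a Riemannian orbifold by the remark after Proposition~\ref{proposition:quotient}. Hence the only place where orbifold-ness can fail is along the image of $M^{S^1}$. So the statement to prove becomes: for each fixed-point component $N$ of codimension $2m$, the quotient is an orbifold near $[N]$ if and only if $2m\le 2$, i.e.\ $m\le 1$. The next step is to compute the local model of the quotient near $[N]$: by the slice theorem a neighborhood of $[p]$ in $S^1\backslash M$ is isometric (as a metric space, and we must track the metric, not just the topology) to a neighborhood of the cone point in $N \times \big(S^1\backslash C(S^{2m-1})\big)$, where $S^1$ acts on the unit sphere $S^{2m-1}\subset\C^m$ via the weights $(a_1,\dots,a_m)$ and $C(\cdot)$ denotes the Euclidean cone. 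When $m=1$ this quotient is a flat cone of total angle $2\pi/a_1$ on a $2$-disk, which is exactly a Riemannian orbifold chart (the quotient of $\R^2$ by the cyclic rotation group $\Z/a_1$); when $m\ge 2$ one has to show the quotient $S^1\backslash C(S^{2m-1})$ is \emph{not} locally isometric to $\R^{2m-1}/\Gamma$ for any finite $\Gamma\subset O(2m-1)$.

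The main obstacle — and the crux of the argument — is precisely this non-orbifold claim for $m\ge 2$, which is where I would invoke the cited results of Lytchak--Thorbergsson (\cite{MR2719410}) and Briquet (\cite{briquet11}): a Riemannian quotient $G\backslash M$ is a Riemannian orbifold if and only if it has no boundary \emph{and} the quotient has "no essential singularities," equivalently the normal holonomy/slice representations along all singular strata are polar (or, concretely in this cohomogeneity situation, reflection groups). For $S^1$ acting on $\C^m$ with $m\ge 2$ and all weights nonzero, the orbit space is a $(2m-1)$-dimensional Alexandrov space whose space of directions at the cone point is $S^1\backslash S^{2m-1}$, which for $m\ge 2$ is a positively curved space that is not a spherical orbifold (its curvature is strictly greater than $1$ somewhere, or it fails the orbifold condition on its own space of directions by induction) — in particular it is not isometric to $S^{2m-2}/\Gamma$. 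I would phrase this as: the tangent cone of $S^1\backslash M$ at $[p]$ is $\R^{\dim N}\times C_0(S^1\backslash S^{2m-1})$, and a Riemannian orbifold has all tangent cones isometric to $\R^k/\Gamma$; by Lytchak--Thorbergsson this flat-cone condition fails exactly when the codimension $2m$ of the fixed stratum exceeds $2$. Finally I would assemble: if the fixed set is empty or of codimension $\le 2$ (so every component has $m\le 1$), all local models are orbifold charts and $S^1\backslash M$ is a Riemannian orbifold; conversely if some component has codimension $\ge 3$, the tangent cone there is not $\R^{2m-1}/\Gamma$, so $S^1\backslash M$ is not isometric to any Riemannian orbifold. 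I expect the delicate point requiring care to be the metric (not merely topological or smooth) identification of the slice model and the precise citation of the Lytchak--Thorbergsson criterion, since the "only if" direction is not elementary and genuinely relies on their classification; everything else is the standard slice-theorem bookkeeping.
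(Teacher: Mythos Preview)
The paper does not give its own proof of this proposition: it is stated with the attribution ``\cite[Proposi\c{c}\~ao 3.50]{briquet11}, based on \cite{MR2719410}'' and then used as a black box. Your sketch is therefore not competing with any argument in the paper; you are reconstructing a proof and, at the decisive step, you invoke precisely the same two references the paper cites.

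As a sketch your outline is sound: localize via the slice theorem, handle the almost-free locus by the remark after Proposition~\ref{proposition:quotient}, and analyze the tangent cone at a fixed point. One small comment: the curvature observation you mention in passing---that for $m\ge 2$ the quotient $S^1\backslash S^{2m-1}$ has, by O'Neill's formula, sectional curvature strictly greater than $1$ on some $2$-plane and hence cannot be isometric to any spherical space form $S^{2m-2}/\Gamma$---already finishes the ``only if'' direction once you know the space of directions of $S^1\backslash M$ at $[p]$ is $S^1\backslash S^{2m-1}$. So the dependence on the full Lytchak--Thorbergsson criterion is lighter than your last sentence suggests; the point that genuinely needs care (and a precise citation) is the metric identification of the tangent cone of the quotient with the linear isotropy quotient, not the non-orbifold conclusion itself.
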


\subsection{Quotients of spheres}\label{subsec:quotients_spheres}
Let $m\ge 3$ and denote by $(M,g_0)=(S^{2m+3},g_0)\subset(\C^{m+2},\langle,\rangle)$ the round sphere (with $\langle,\rangle$ the canonical metric). We let $G:=S^1\subset\C$ act on $S^{2m+3}$ via
\[\sigma(u,v):=(\sigma u,v),\]
where $\sigma\in S^1\subset\C$ and $(u,v)\in S^{2m+3}$ with $u\in\C^{m},v\in\C^2$. Moreover, we let the torus $T:=S^1\times S^1\subset\C\times\C$ act on $S^{2m+3}$ via $(\sigma_1,\sigma_2)(u,v_1,v_2):=(u,\sigma_1 v_1,\sigma_2 v_2)$. Both actions are effective and isometric and obviously commute. By $Z_1:=(i,0)$ and $Z_2:=(0,i)$ we will denote the canonical basis of the space $\mathfrak{t}=T_{(1,1)}(S^1\times S^1)\subset\C^2$ which we identify with $\R^2$ via $(it_1,it_2)\mapsto(t_1,t_2)$.

In order to choose appropriate pairs $(\kappa,\kappa^\prime)$ and apply Theorem \ref{theorem:isospect} we recall the following notions from \cite{weilandt10a} (based on \cite{MR1895349}):

\begin{definition}

\label{definition:isomaps}
Let $m\in\N$ and let $j,j^\prime\co {\mathfrak t}\simeq\R^2\to \mathfrak{su}(m)$ be two linear maps.
\begin{enumerate}[(i)]
 \item \label{item:isospec} We call $j$ and $j^\prime$ \emph{isospectral} if for each $Z\in {\mathfrak t}$ there is an $A_Z\in SU(m)$ such that $j_Z^\prime=A_Zj_ZA_Z^{-1}$.
 \item \label{definition:isomaps2} Let $\conj\co\C^m\to\C^m$ denote complex conjugation and set
   \[\mathcal{E}:=\{\phi\in\Aut({\mathfrak t});~\phi(Z_k)\in\{\pm Z_1,\pm Z_2\}\text{ for }k=1,2\}.\]
   We call $j$ and $j^\prime$ \emph{equivalent} if there is $A\in SU(m)\cup SU(m)\circ \conj$ and $\Psi\in\mathcal{E}$ such that $j_Z^\prime=Aj_{\Psi(Z)} A^{-1}$ for all $Z\in {\mathfrak t}$.
 \item \label{definition:isomaps3} We say that $j$ is \emph{generic} if no nonzero element of $\mathfrak{su}(m)$ commutes with both $j_{Z_1}$ and $j_{Z_2}$.
\end{enumerate}

\end{definition}

The following proposition is just a corollary of \cite[Proposition 3.2.6(i)]{MR1895349}. (Note that the definition of equivalence given in \cite{MR1895349} is slightly different from (\ref{definition:isomaps2}) above but this does not affect the validity of the following proposition.)
\begin{proposition}
\label{proposition:isomaps}
For every $m\ge 3$ there is an open interval $I\subset \R$ and a continuous family $j(t)$, $t\in I$, of linear maps $\R^2\to \mathfrak{su}(m)$ such that
\begin{enumerate}

\item[(i)] The maps $j(t)$ are pairwise isospectral.
\item[(ii)] \label{enumitem:nonequiv} For $t_1,t_2\in I$ with $t_1\ne t_2$ the maps $j(t_1)$ and $j(t_2)$ are not equivalent.
\item[(iii)] All maps $j(t)$ are generic.
\end{enumerate}

\end{proposition}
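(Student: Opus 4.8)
The statement to be proved, Proposition \ref{proposition:isomaps}, asserts the existence of a continuous family of linear maps $j(t)\co\R^2\to\mathfrak{su}(m)$ which are pairwise isospectral, pairwise non-equivalent for distinct parameters, and all generic. This is explicitly described as ``just a corollary of \cite[Proposition 3.2.6(i)]{MR1895349}'', so the plan is essentially to quote that source and reconcile the minor definitional discrepancy flagged in the parenthetical remark. Let me sketch how I would organize this.

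\textbf{Plan.}
First I would recall the content of \cite[Proposition 3.2.6(i)]{MR1895349}: Sch\"uth constructs, for each $m\ge 3$, a real-analytic (in particular continuous) family $j(t)$, $t$ in an open interval $I$, of linear maps $\R^2\to\mathfrak{su}(m)$ that are pairwise isospectral in the sense of Definition \ref{definition:isomaps}(\ref{item:isospec}), generic in the sense of Definition \ref{definition:isomaps}(\ref{definition:isomaps3}), and pairwise non-equivalent with respect to the notion of equivalence used in \cite{MR1895349}. Properties (i) and (iii) of the present proposition are then immediate, since both the isospectrality and the genericity conditions are verbatim the ones in \cite{MR1895349}. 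So the only thing requiring an argument is property (ii), and only because our Definition \ref{definition:isomaps}(\ref{definition:isomaps2}) of equivalence differs slightly from Sch\"uth's.

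The key step is therefore to compare the two notions of equivalence and show that non-equivalence in Sch\"uth's sense implies non-equivalence in our sense; equivalently, that our notion of equivalence is \emph{not weaker} than (i.e., is implied by, or at worst coincides on the relevant orbits with) Sch\"uth's. Concretely, I would spell out both definitions side by side: our equivalence allows $A\in SU(m)\cup SU(m)\circ\conj$ together with $\Psi\in\mathcal{E}$, where $\mathcal{E}$ consists of automorphisms of $\mathfrak t$ permuting $\{\pm Z_1,\pm Z_2\}$ among the $Z_k$; Sch\"uth's version allows a possibly larger or differently-shaped group of substitutions on the $\R^2$ side (typically all of $O(2)$ or $GL(2,\Z)$-type reindexings, or conjugation combined with an orthogonal change of basis of $\mathfrak t$). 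One then checks that $\mathcal{E}$ is a \emph{subgroup} of whatever substitution group Sch\"uth allows, and that $SU(m)\cup SU(m)\circ\conj$ is contained in (or equal to) the group of $A$'s she allows. Granting these two containments, any pair $(j(t_1),j(t_2))$ equivalent in our sense is \emph{a fortiori} equivalent in Sch\"uth's sense; contrapositively, Sch\"uth's pairwise non-equivalence yields ours. The continuity of the family is inherited directly. I would close by noting that, as remarked, the discrepancy ``does not affect the validity'' precisely because of this containment, so no reconstruction of Sch\"uth's argument is needed.

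\textbf{Main obstacle.} The only real subtlety is bookkeeping: one must correctly identify which substitution group on the $\R^2 = \mathfrak t$ side and which unitary-plus-conjugation group on the $\mathfrak{su}(m)$ side appear in \cite[Prop.\ 3.2.6]{MR1895349}, verify the containments $\mathcal{E}\hookrightarrow(\text{Sch\"uth's substitutions})$ and $SU(m)\cup SU(m)\circ\conj\hookrightarrow(\text{Sch\"uth's transformations})$, and be careful that restricting to a \emph{smaller} equivalence relation can only make \emph{more} pairs inequivalent, never fewer --- so that property (ii) is preserved. If by chance Sch\"uth's relation and ours are genuinely incomparable rather than nested, a short extra argument would be required showing that on the specific family $j(t)$ the two relations have the same orbits, e.g.\ by inspecting the explicit normal forms of $j_{Z_1}(t)$ and $j_{Z_2}(t)$; but the expected situation is simple containment, making the corollary immediate.
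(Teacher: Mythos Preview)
Your proposal is correct and matches the paper's approach exactly: the paper gives no proof for this proposition beyond the sentence preceding it, namely the citation of \cite[Proposition 3.2.6(i)]{MR1895349} together with the parenthetical remark that the slightly different notion of equivalence in \cite{MR1895349} does not affect the validity. Your plan --- invoke Sch\"uth's result directly for (i) and (iii), then argue that the discrepancy in the equivalence relation is harmless for (ii), via containment or (if needed) inspection of the explicit family --- is precisely the unpacking of that remark, and your identification of the bookkeeping on the two equivalence groups as the only genuine point to check is spot on.
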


For a linear map $j\co{\mathfrak t}\simeq\R^2\to \mathfrak{su}(m)$ we consider the following $\R^2$-valued 1-form $\kappa$ on $S^{2m+3}$: For $u,U\in\C^{m}$, $v,V\in\C^2$ and $k=1,2$ set
\begin{equation}
  \label{equation:kappa1}
  \kappa_{(u,v)}^k(U,V):=\|u\|^2\langle j_{Z_k}u,U\rangle-\langle U,iu\rangle\langle j_{Z_k}u,iu\rangle
\end{equation}
and pull back to $S^{2m+3}$. Since $\kappa$ does not depend on $v$ or $V$, it is $T$-invariant and $T$-horizontal. Moreover, it is also easily seen to be $S^1$-invariant and $S^1$-horizontal. Now let $j,j^\prime\co \R^2\to\mathfrak{su}(m)$ be isospectral maps and let $\kappa$, $\kappa^\prime$ be the induced $\mathfrak{t}$-valued 1-forms on $S^{2m+3}$. Given an arbitrary element $\mu$ of the dual lattice $\mathcal{L}^\ast$, we set $Z:=\mu(Z_1)Z_1+\mu(Z_2)Z_2$ and choose $A_Z\in SU(m)$ as in Definition \ref{definition:isomaps}(\ref{item:isospec}). Then $E_\mu:=(A_Z,Id_2)\subset SU(m)\times SU(2)$ is obviously $T$-equivariant and satisfies $\mu\circ\kappa=E_\mu^*(\mu\circ\kappa^\prime)$ (see \cite[Proposition 3.2.5]{MR1895349} for a similar calculation). Since $E_\mu$ is also equivariant under our $S^1$-action, Theorem \ref{theorem:isospect} implies that given isospectral $j,j^\prime$, the corresponding Alexandrov spaces $(S^1\backslash S^{2m+3},d_{g_\kappa})$ and $(S^1\backslash S^{2m+3},d_{g_{\kappa^\prime}})$ are isospectral (as we will observe again in Section \ref{subsec:quotients_stiefel}).

Note that the only non-trivial stabilizer is $S^1 $ and the set of fixed points is given by all $[(0,v)]$ with $\|v\|=1$; i.e., for every $\kappa$ the singular part $(S^1\backslash S^{2m+3},d_{g_{\kappa}})^\sing$ is isometric to $S^3$ with the round metric (since $\kappa_{(0,v)}=0$ by \eqref{equation:kappa1}). By Proposition \ref{prop:not_orbifold}, this implies that our quotients are not isometric to Riemannian orbifolds. This observation and the following lemma now show that these examples are genuinely new; i.e., they cannot be obtained using some pair of isospectral orbifolds. We phrase it for isospectral families (with $\kappa(t)$ denoting the admissible form associated with $j(t)$) but of course the same argument also works for pairs. In the following lemma the term Alexandrov space refers to the general Definition \ref{def:as} (without additional assumptions on the regular part).

\begin{lemma}
\label{lem:not_product_sphere}
For every $m\ge 3$ there is a family $j(t)$, $t\in I$, as in Proposition \ref{proposition:isomaps} with the additional property that none of the Alexandrov spaces in the isospectral family $(S^1\backslash S^{2m+3},d_{g_\kappa(t)})$, $t\in I$, is a product of non-trivial Alexandrov spaces.
\end{lemma}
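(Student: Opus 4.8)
The plan is to prove the stronger statement that for \emph{every} admissible $1$-form $\kappa$ of the form \eqref{equation:kappa1} the Alexandrov space $X:=(S^1\backslash S^{2m+3},d_{g_\kappa})$ is not a metric product of two Alexandrov spaces of positive dimension; the lemma then holds with $\kappa=\kappa(t)$ for \emph{any} family $j(t)$, $t\in I$, as in Proposition \ref{proposition:isomaps} (genericity of $j(t)$ is not even used). There are two inputs. The first is the fact, already observed above, that for every $\kappa$ the singular part $X^{\sing}$ is isometric to the round $S^3$; in particular it is non-empty, connected and $3$-dimensional. The second is a topological identification of $X$: since $S^{2m+3}$ is the join $S^{2m-1}*S^3$ of the unit spheres in $\C^m$ and $\C^2$, with the $S^1$-action being the Hopf action on the first factor and trivial on the second, and since passing to quotients commutes with joins, $X$ is homeomorphic to $\CP^{m-1}*S^3$ — independently of $\kappa$, as the metric enters only through a homeomorphism of $X$ onto the topological quotient.

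First I would record the homotopy-theoretic consequences: $\CP^{m-1}$ is $1$-connected and $S^3$ is $2$-connected, so their join $X=\CP^{m-1}*S^3$ is $(1+2+2)=5$-connected; in particular $X$ is simply connected and $\widetilde H_k(X;\Z)=0$ for $k\le 5$. Hence, if $X=X_1\times X_2$ with $\dim X_i\ge 1$, then $X_1$ and $X_2$ are compact, connected and simply connected.

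Next I would bring in the singular set. By the standard fact that $(Y_1\times Y_2)^{\reg}=Y_1^{\reg}\times Y_2^{\reg}$ for Alexandrov spaces (the space of directions of a product is the spherical join of the spaces of directions of the factors, and such a join is a round sphere only when both factors are), one has $X^{\sing}=(X_1^{\sing}\times X_2)\cup(X_1\times X_2^{\sing})$. If both $X_1^{\sing}$ and $X_2^{\sing}$ were non-empty, then $X_1^{\sing}\times X_2,\ X_1\times X_2^{\sing}\subseteq X^{\sing}$ would give $\dim X_1^{\sing}+\dim X_2\le 3$ and $\dim X_1+\dim X_2^{\sing}\le 3$; adding these and using $\dim X_1+\dim X_2=\dim X=2m+2$ yields $2m+2\le 6$, impossible for $m\ge 3$. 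Therefore exactly one factor, say $X_2$, satisfies $X_2^{\sing}=\emptyset$, so that $X_2=X_2^{\reg}$ is a closed manifold (Proposition \ref{proposition:alex-mf}), while $X_1^{\sing}\ne\emptyset$; then $X^{\sing}=X_1^{\sing}\times X_2$ is $3$-dimensional, which forces $\dim X_2\in\{1,2,3\}$.

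Finally I would eliminate these three cases using $\pi_1(X)=1$, $H_2(X;\Z)=0$ and $H_3(X;\Z)=0$: there is no closed connected simply connected $1$-manifold; a closed simply connected surface is $S^2$, so by the Künneth theorem $H_2(X)$ contains $H_0(X_1)\otimes H_2(X_2)=\Z$, a contradiction; and a closed simply connected $3$-manifold is an integral homology $3$-sphere (by Poincaré duality and the universal coefficient theorem — the Poincaré conjecture is not needed), so $H_3(X)$ contains $H_0(X_1)\otimes H_3(X_2)=\Z$, again a contradiction. The one point worth flagging is that no purely metric obstruction is available here: since $X^{\sing}\cong S^3$ already has diameter $\pi$, an $S^3$-factor cannot be excluded by comparing diameters or volumes, so the decomposition $X=X_1\times S^3$ has to be ruled out topologically — which is exactly what the homology vanishing above achieves. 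Everything else is a dimension count together with standard facts about Alexandrov products and spherical joins, which can be cited (e.g.\ to \cite{MR1835418}) rather than reproven.
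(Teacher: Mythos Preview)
Your argument is correct and proves a stronger statement than the paper does: you show that \emph{every} $(S^1\backslash S^{2m+3},d_{g_\kappa})$ with $\kappa$ of the form \eqref{equation:kappa1} is topologically indecomposable, so no special choice of the family $j(t)$ is needed. The identification $S^1\backslash S^{2m+3}\cong \CP^{m-1}*S^3$, its $5$-connectedness, the decomposition of $(X_1\times X_2)^\sing$, and the elimination of $\dim X_2\in\{1,2,3\}$ via $\pi_1$ and K\"unneth are all sound. (A small nitpick: in the dimension count it is cleaner to fix $p\in X_1^\sing$ and embed $\{p\}\times X_2$ into $S^3$, avoiding any appeal to additivity of Hausdorff dimension for the possibly non-Alexandrov set $X_1^\sing$.)

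The paper's proof is genuinely different: it does \emph{not} work for an arbitrary $\kappa$, but first rescales the family $j(t)$ by a small constant so that each $g_{\kappa(t)}$ is close to the round metric and hence has positive sectional curvature; O'Neill's formula then gives positive curvature on the regular part $(S^1\backslash S^{2m+3})^\reg$. From a putative splitting $A_1\times A_2$ one argues, using $S^3=X^\sing$ and positivity of curvature on the round $S^3$, that one factor is exactly $S^3$; but then $(S^1\backslash S^{2m+3})^\reg$ would split as a Riemannian product with a positive-dimensional $S^3$-factor, contradicting positive curvature. So the paper trades your algebraic-topology input (connectivity of joins, K\"unneth, classification of low-dimensional simply connected manifolds) for a purely Riemannian obstruction, at the cost of having to modify the family. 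Your route is more general and metric-independent; the paper's route is more self-contained within Riemannian geometry and reuses machinery already set up for the isospectrality construction.
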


\begin{proof}
Let $j(t)$, $t\in I$, be a familiy of isospectral maps from Proposition \ref{proposition:isomaps}. Scaling all $j(t)$ via the same sufficiently small positive number, the metrics $g_{\kappa(t)}$ can be chosen arbitrarily close to the round metric $g_0$, while the properties from Proposition \ref{proposition:isomaps} are preserved. In particular, we can choose the family $j(t)$, $t\in I$, such that all $(S^{2m+3},g_{\kappa(t)})$ have positive sectional curvature. Then O'Neill's curvature formula (\cite{oneill}) implies that the manifolds $(S^1\backslash S^{2m+3},d_{g_{\kappa(t)}})^\reg$ also have positive sectional curvature.

Suppose that there is $t_0\in I$ and non-trivial Alexandrov spaces $A_1,A_2$ such that $(S^1\backslash S^{2m+3}, d_{g_{\kappa(t_0)}}) = A_1 \times A_2$ (as a product of metric spaces). Since the regular part of a product of Alexandrov spaces is the product of the regular parts of the factors (as follows, e.g., from \cite[Proposition 78]{plaut}), we have $S^3=(S^1\backslash S^{2m+3})^\sing = A_1 \times A_2^\sing \cup A_1^\sing \times A_2$. This observation and the fact that $A_1\times A_2=S^1\backslash S^{2m+3}$ is at least $8$-dimensional imply that either $A_1$ or $A_2$ has no singular points, i.e., is a Riemannian manifold.

Without loss of generality assume that $A_1$ is a manifold and hence $S^3 = A_1 \times A_2^\sing$. Then $A_2^\sing$ (considered as a convex subspace of $S^3$) is an Alexandrov space and hence (since $S^3$ has no singular points) is a Riemannian manifold, too. Hence either $A_1$ or $A_2^\sing$ must be a point.

Since $A_1$ was assumed to be non-trivial, $A_2^\sing$ must be a point and hence $S^3 = A_1$; i.e., $(S^1\backslash S^{2m+3}, d_{g_{\kappa(t_0)}}) = S^3 \times A_2$. But since we have scaled all $j(t)$ in such a way that the connected manifold $(S^1\backslash S^{2m+3}, d_{g_{\kappa(t)}})^\reg$ has positive sectional curvature for all $t \in I$, we conclude that $A_2^\reg$ must be just a point---in contradiction to the fact that $(S^1\backslash S^{2m+3})^\reg$ is at least $8$-dimensional.
\end{proof}

Together with the non-isometry result from Section \ref{sec:non_isom}, the results of this section now prove the theorem stated in the introduction.

\subsection{Quotients of Stiefel manifolds}\label{subsec:quotients_stiefel}
First we prove a general method to construct isospectral metrics on Alexandrov spaces (Proposition \ref{prop:isospec_construction}) and then apply it to quotients of Stiefel manifolds. For manifolds this method has first been given in \cite{MR2008331}.

Let $H$ and $G$ be compact, connected Lie groups with Lie algebras $\frakh$ and $\frakg$, respectively, and let $T$ be a torus with Lie algebra $\frakt$. We also fix an inner product $\langle,\rangle_\frakt$ on $\frakt$. The following definition generalizes the previous Definition \ref{definition:isomaps}(\ref{item:isospec}):

\begin{definition}[cf. {\cite[Definition 1]{MR2008331}}]
\label{definition:isomapsPrincipal}
Two linear maps $j, j^\prime\co \frakt \to \frakh$ are called \emph{isospectral} if for each $Z \in \frakt$ there exists an $a_Z \in H$ such that $j^\prime_Z = \Ad_{a_Z}(j_Z).$
\end{definition}

Let $(M, g_0)$ be a closed, connected Riemannian manifold, let $G$ and $H \times T$ act isometrically on $(M,g_0)$ and assume that these two actions commute. Given a linear map $j\co \frakt \to \frakh$, each $j_Z := j(Z) \in \frakh$ (with $Z \in \frakt$), induces a vector field $j^\#_Z$ on $M$. We define a $\frakt$-valued $1$-form $\kappa$ on $M$ via
\begin{equation}
  \label{eq:kappa}
  \langle \kappa(X),Z\rangle_\frakt = g_0(X,j_Z^\#)
\end{equation}
for $Z \in \frakt$ and $X \in TM$. Now $\kappa$ is $G$- and $T$-invariant, because the $G$- and $T$-actions both commute with the action of $H$. Moreover, assuming that $\kappa$ is $T$-horizontal and the $T$-orbits meet the $G$-orbits perpendicularly (as will be the case in our examples), we obtain (using the corresponding idea from \cite{MR2008331}) an admissible 1-form $\kappa_{\mathcal H}$ by fixing a basis $(G_1, \ldots, G_l)$ of $\frakg$ and setting

\begin{align*}
  \kappa_{\mathcal H} & (X) := \|G_1^\# \wedge \ldots \wedge G_l^\#\|_0^2 \kappa(X) -\\
  & - \sum_{k=1}^{l} g_0(G_1^\# \wedge \ldots \wedge G_{k-1}^\# \wedge X \wedge G_{k+1}^\# \wedge \ldots \wedge G_l^\#, G_1^\# \wedge \ldots \wedge G_l^\#) \kappa(G_k^\#)
\end{align*}
for $X \in TM$. We will call $\kappa_{\mathcal H}$ the \emph{horizontalization} of $\kappa$.

Applying the ideas from \cite[Proposition 3]{MR2008331} to our situation with an additional $G$-action, we now obtain the following result:

\begin{proposition}\label{prop:isospec_construction}
If $j, j^\prime\co \frakt \to \frakh$ are isospectral linear maps, then the associated $\frakt$-valued $1$-forms $\kappa_{\mathcal H}, \kappa_{\mathcal H}^\prime$ on $M$ satisfy the conditions of Theorem \ref{theorem:isospect}; in particular, $(G\backslash M, d_{g_{\kappa_{\mathcal H}}})$ and $(G\backslash M, d_{g_{\kappa_{\mathcal H}^\prime}})$ are isospectral Alexandrov spaces.
\end{proposition}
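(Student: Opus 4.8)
The plan is to verify the hypotheses of Theorem \ref{theorem:isospect} directly, namely that for each $\mu \in \mathcal{L}^*$ there is a $G$- and $T$-equivariant isometry $E_\mu \in \Isom(M, g_0)$ with $\mu \circ \kappa_{\mathcal H} = E_\mu^*(\mu \circ \kappa_{\mathcal H}')$. Fix $\mu \in \mathcal{L}^*$ and set $Z := $ the element of $\frakt$ dual to $\mu$ under $\langle,\rangle_\frakt$, so that $\mu(\cdot) = \langle \cdot, Z\rangle_\frakt$; then by Definition \ref{definition:isomapsPrincipal} there is $a_Z \in H$ with $j'_Z = \Ad_{a_Z}(j_Z)$. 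The natural candidate is $E_\mu := a_Z$, acting on $M$ via the given $H$-action. Since the $G$- and $(H\times T)$-actions commute, $E_\mu$ is automatically $G$- and $T$-equivariant, and it is an isometry of $(M,g_0)$ since $H$ acts isometrically.

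The core computation is to show $\mu \circ \kappa = E_\mu^*(\mu \circ \kappa')$ for the \emph{un-horizontalized} forms first. Unwinding \eqref{eq:kappa}, for $X \in TM$ we have $(\mu \circ \kappa)(X) = \langle \kappa(X), Z\rangle_\frakt = g_0(X, j_Z^\#)$, and similarly $(\mu \circ \kappa')(E_{\mu*}X) = g_0(E_{\mu*}X, (j'_Z)^\#)$. The key identity is that the fundamental vector field satisfies $(\Ad_a W)^\# = a_* (W^\#)$ (equivariance of the infinitesimal action under the $H$-action), so $(j'_Z)^\# = (\Ad_{a_Z} j_Z)^\# = (a_Z)_*(j_Z^\#)$. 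Combining this with the fact that $a_Z$ is a $g_0$-isometry gives $g_0(E_{\mu*}X, (j'_Z)^\#) = g_0((a_Z)_* X, (a_Z)_*(j_Z^\#)) = g_0(X, j_Z^\#)$, which is exactly $(\mu \circ \kappa)(X)$. Hence $\mu \circ \kappa = E_\mu^*(\mu \circ \kappa')$. This is the analogue of \cite[Proposition 3]{MR2008331}, and is essentially the same calculation as the sphere case worked out after Proposition \ref{proposition:isomaps}.

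It then remains to pass from $\kappa$ to its horizontalization $\kappa_{\mathcal H}$. Since $E_\mu = a_Z \in H$ commutes with the $G$-action, it intertwines the fundamental vector fields $G_k^\#$ of the fixed basis of $\frakg$ (indeed $(a_Z)_* G_k^\# = G_k^\#$ because the $G$- and $H$-actions commute), and being a $g_0$-isometry it preserves all the inner products and wedge-norms appearing in the formula defining $\kappa_{\mathcal H}$. Therefore $E_\mu^*$ applied to $\kappa'_{\mathcal H}$ produces exactly the same combination of $\kappa$-terms, coefficients and norms as $\kappa_{\mathcal H}$; composing with $\mu$ and using $\mu \circ \kappa = E_\mu^*(\mu \circ \kappa')$ termwise yields $\mu \circ \kappa_{\mathcal H} = E_\mu^*(\mu \circ \kappa'_{\mathcal H})$. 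One should also note that $\kappa_{\mathcal H}$ is genuinely admissible — $T$-invariance and $G$-invariance are inherited from $\kappa$ and the basis construction, and $T$- and $G$-horizontality follow from the stated hypotheses that $\kappa$ is $T$-horizontal and that the $T$- and $G$-orbits meet perpendicularly — so that Theorem \ref{theorem:isospect} genuinely applies. The isospectrality conclusion for $(G\backslash M, d_{g_{\kappa_{\mathcal H}}})$ and $(G\backslash M, d_{g_{\kappa'_{\mathcal H}}})$ is then immediate.

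I expect the main obstacle to be purely bookkeeping rather than conceptual: carefully checking that $\kappa_{\mathcal H}$ is admissible (in particular $G$-horizontal), and verifying that every factor in the horizontalization formula is genuinely preserved by the isometry $E_\mu$ — including the interaction between $E_\mu$ and the chosen basis $(G_1,\dots,G_l)$ of $\frakg$. None of this is deep, but it must be done term by term, and it is where the additional $G$-action (absent in \cite{MR2008331}) requires a little extra care.
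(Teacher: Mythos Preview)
Your proposal is correct and follows essentially the same approach as the paper: define $E_\mu$ via the element $a_Z \in H$ (the paper writes it as $(a_Z,\Id_2)\in H\times T$, which is the same map), use that $H$ commutes with $G$ and $T$ to get equivariance, and derive $\mu\circ\kappa_{\mathcal H}=E_\mu^*(\mu\circ\kappa_{\mathcal H}')$ from $j_Z'=\Ad_{a_Z}(j_Z)$ together with the $E_\mu$-invariance of the vector fields $G_k^\#$. Your write-up simply unpacks in detail the two steps the paper compresses into a single sentence, first handling the un-horizontalized $\kappa$ via $(\Ad_a W)^\# = a_*(W^\#)$ and then checking that horizontalization is preserved; the admissibility remark you include is stated in the paper just before the proposition rather than in the proof.
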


\begin{proof}
We check that the condition of Theorem \ref{theorem:isospect} is fulfilled; i.e., for each $\mu \in \mathcal{L}^\ast$ there exists a $T$- and $G$-equivariant $E_\mu \in \Isom(M, g_0)$ such that $\mu \circ \kappa_{\mathcal H} = E_\mu^\ast(\mu \circ \kappa_{\mathcal H}^\prime)$.

With $Z\in\frakt$ defined via $\langle Z,X\rangle_\frakt=\mu(X) ~\forall X\in\frakt$ and $a_Z \in H$ given by Definition \ref{definition:isomapsPrincipal}, we define $E_\mu:=(a_Z , \Id_2)\in H\times T$. Since $H$ commutes with both $G$ and $T$, the isometry $E_\mu$ is $G$- and $T$-equivariant. The equation $\mu \circ \kappa_{\mathcal H} = E_\mu^\ast(\mu \circ \kappa_{\mathcal H}^\prime)$ then follows from $j_Z^\prime = \operatorname{Ad}_{a_Z}(j_Z)$ and the fact that the vector fields $G_k^\#$ are $E_\mu$-invariant (which is the case, because $E_\mu$ is $G$-equivariant).
\end{proof}

\begin{remark}\label{rem:general_horizontalization}
The assumption that $\kappa$ is $T$-horizontal and the $T$-orbits meet the $G$-orbits perpendicularly is not essential. If these conditions were not satisfied, we could use a horizontalization with respect to both the $G$- and the $T$-action and the proposition above would carry over to this more general setting.
\end{remark}

Now we will summarize some basic facts about Stiefel manifolds (of dimensions relevant to our constructions later on). 

As in the previous section let $m\ge 3$. We write $s := m+2$ and fix $r$ such that $1\le r \le s$. By
\[M := V_r(\C^s) = \{Q \in \C^{s \times r};~ Q^\ast Q = I_r\}\]
we denote the complex Stiefel manifold consisting of orthonormal $r$-frames in $\C^s$. $M$ is a manifold of (real) dimension $2r(s-r)$ and the tangent space in $Q\in M$ is given by
\[T_Q M = \{X\in \C^{s \times r};~ Q^\ast X + X^\ast Q = 0\}\]
On $T_Q M$ we use the $U(s)$-left-invariant and $U(r)$-right-invariant Riemannian metric
\begin{equation}
  \label{eq:stiefel-g0}
  {g_0}_{|Q}(X,Y) := \Re \tr(X^\ast Y).
\end{equation}

Note that we do not use the normal homogeneous metric induced by the bi-invariant metric on $U(s)$, since then the $T$-orbits would not meet the $G$-orbits perpendicularly. However, for this metric one could still give a similar construction based on Remark \ref{rem:general_horizontalization}.

Let $H \times T := SU(m) \times (S^1 \times S^1) \subset U(m+2)$ act on the Stiefel manifold $M$ by multiplication from the left and let $G := S^1\simeq U(1)$ be diagonally embedded in $U(m)\simeq U(m)\times\{I_2\}$ and also act via multiplication from the left on $M$. Note that these two effective and isometric actions commute and $T$ acts effectively on $G\backslash M$. As in Section \ref{subsec:quotients_spheres} write $Z_1=(i,0)$, $Z_2=(0,i)$ for the canonical basis of $\mathfrak{t}$. Writing $
\mathfrak{i}:=i$ for the basis of $\mathfrak{g}=i\R$ and

\[\calZ_1 :=\begin{pmatrix}\textbf{0}_m&&\\&i&0\\&0&0\end{pmatrix},~\calZ_2 := \begin{pmatrix}\textbf{0}_m&&\\&0&0\\&0&i\end{pmatrix},~\cali := \begin{pmatrix}iI_m&\\&\textbf{0}_2\end{pmatrix} \in \fraku(s),\]
we have $Z_i^\#(Q) = \calZ_i Q$ and $\fraki^\#(Q) = \cali Q$ and obtain $g_0(Z_i^\#,\fraki^\#)=0$; i.e., the $T$-orbits meet the $G$-orbits perpendicularly. Fixing a linear map $j\co\R^2 \cong \frakt \to \frakh = \fraksu(m)$ and setting
\[\calj_{Z_1} := \begin{pmatrix}j(Z_1) &\\&\textbf{0}_2\end{pmatrix},~\calj_{Z_2} := \begin{pmatrix}j(Z_2) &\\&\textbf{0}_2\end{pmatrix}\in \fraku(s),\]
we obtain $j^\#_{Z_k}(Q) = \calj_{Z_k} Q$ and hence $g_0(Z_i^\#,j^\#_{Z_k})=0$. We conclude that the form $\kappa$ given by \eqref{eq:kappa} is indeed $T$-horizontal and we can consider the associated horizontalisation given by $\kappa_{\mathcal H}(X)=\|\fraki^\#\|_0^2\kappa(X)-g_0(X,\fraki^\#)\kappa(\fraki^\#)$. If we now replace $j$ by an entire isospectral family, Proposition \ref{prop:isospec_construction} (together with Proposition \ref{proposition:isomaps}) implies:

\begin{proposition}\label{prop:isospec_alexandrov}
For every $m \ge 3$ and $1 \le r \le m+2$ there is a continuous family of isospectral maps $j(t)\co\R^2 \cong \frakt \to \frakh = \fraksu(m)$ and each such family defines a continuous family of pairwise isospectral Alexandrov spaces $(S^1\backslash M,d_{\kappa_{\mathcal H}(t)})$.
\end{proposition}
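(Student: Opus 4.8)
The plan is to assemble the statement from the two preceding propositions together with the computations recorded in the paragraph just above it; there is no deep new input needed.

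First I would invoke Proposition~\ref{proposition:isomaps}: for every $m\ge 3$ it furnishes an open interval $I\subset\R$ and a continuous family $j(t)\co\R^2\cong\frakt\to\fraksu(m)$, $t\in I$, whose members are pairwise isospectral in the sense of Definition~\ref{definition:isomaps}(\ref{item:isospec}). Since here $\frakh=\fraksu(m)$ and $\Ad_a$ on $SU(m)$ is nothing but conjugation $X\mapsto aXa^{-1}$, isospectrality in the sense of Definition~\ref{definition:isomaps}(\ref{item:isospec}) is literally the same as isospectrality in the sense of Definition~\ref{definition:isomapsPrincipal}. So the family $j(t)$ is also pairwise isospectral in the latter sense. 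Note that this construction depends only on $m$, not on $r$, which is why the same family works for every admissible $r$.

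Next I would check that the standing hypotheses of Proposition~\ref{prop:isospec_construction} are met in the present Stiefel set-up. The manifold $M=V_r(\C^s)$ with the metric \eqref{eq:stiefel-g0} is closed and connected; $G=S^1$ and $H\times T=SU(m)\times(S^1\times S^1)$ act isometrically by left multiplication, and the two actions commute because $G$ and $T$ sit in complementary coordinate blocks of $U(m+2)$; and $T$ acts effectively on $G\backslash M$. Finally, the $T$-horizontality of $\kappa$ and the perpendicularity of the $T$-orbits to the $G$-orbits are exactly the relations $g_0(Z_i^\#,\fraki^\#)=0$ and $g_0(Z_i^\#,j^\#_{Z_k})=0$ established in the discussion preceding the proposition; these also guarantee that the horizontalization $\kappa_{\mathcal H}(t)$ of the form built from $j(t)$ is a well-defined admissible $\frakt$-valued $1$-form on $M$. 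Hence Proposition~\ref{prop:isospec_construction} applies directly, without recourse to the more general Remark~\ref{rem:general_horizontalization}.

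Then, for any $t_1,t_2\in I$ the maps $j(t_1),j(t_2)$ are isospectral, so applying Proposition~\ref{prop:isospec_construction} to this pair shows that $\kappa_{\mathcal H}(t_1)$ and $\kappa_{\mathcal H}(t_2)$ satisfy the hypotheses of Theorem~\ref{theorem:isospect}, and therefore that $(S^1\backslash M,d_{g_{\kappa_{\mathcal H}(t_1)}})$ and $(S^1\backslash M,d_{g_{\kappa_{\mathcal H}(t_2)}})$ are isospectral Alexandrov spaces (the Alexandrov property itself being already part of the conclusion of Proposition~\ref{prop:isospec_construction}, or directly from Proposition~\ref{proposition:quotient} since $M$ is a compact manifold and $S^1$ is compact). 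Continuity of the resulting family is inherited from continuity of $t\mapsto j(t)$, since $t\mapsto\kappa_{\mathcal H}(t)$, $t\mapsto g_{\kappa_{\mathcal H}(t)}$ and hence $t\mapsto d_{g_{\kappa_{\mathcal H}(t)}}$ all depend continuously on $t$. This gives the proposition. I do not expect a genuine obstacle: the only two points needing a moment's attention are (a) identifying the two notions of ``isospectral'' for linear maps, which is immediate once one recalls that $\Ad$ on $SU(m)$ is conjugation, and (b) confirming the perpendicularity and horizontality conditions that let us quote Proposition~\ref{prop:isospec_construction} verbatim, which is precisely the content of the computations $g_0(Z_i^\#,\fraki^\#)=0$ and $g_0(Z_i^\#,j^\#_{Z_k})=0$ recorded just before the statement.
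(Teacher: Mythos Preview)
Your proposal is correct and follows exactly the paper's approach: the paper states the proposition as an immediate consequence of Proposition~\ref{prop:isospec_construction} together with Proposition~\ref{proposition:isomaps}, after having verified the perpendicularity and $T$-horizontality conditions in the preceding paragraph. You have simply spelled out in more detail what the paper compresses into a single sentence.
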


\begin{remark}\label{rem:spheres_special_case}
  Observe that the case $r=1$ gives the examples from Section \ref{subsec:quotients_spheres}: In this case we have $(M,g_0)=(V_1(\C^{m+2}),g_0)=(S^{2m+3},g_0)\subset(\C^{m+2},\langle,\rangle)$ with $g_0=\langle,\rangle$ the round metric. Choosing $\langle,\rangle_{\mathfrak{t}}$ as the standard metric on $\mathfrak{t}\simeq\R^2$, \eqref{eq:kappa} gives $\kappa^k_{(u,v)}(U,V)=\langle U,j_{Z_k}u\rangle$ for $(u,v), (U,V)\in \C^m\times\C^2$. Since $\mathbf{i}^\#_{(u,v)}=(iu,0)$, the horizontalization gives precisely the form which we had called $\kappa$ in \eqref{equation:kappa1}.
\end{remark}

For $r\ge 3$ the action of $G=S^1$ on $M$ is free and $G\backslash M$ is a smooth manifold. For $r = 2$ the only non-trivial stabilizer is $G=S^1$ and
the corresponding fixed points in $M=V_2(\C^{m+2})$ are given by $\left\{\begin{pmatrix}\mathbf{0}_{m\times 2}\\Q\end{pmatrix};~Q\in U(2)\right\}$. Note that since these are also fixed by $H$, every form $\kappa$ given by \eqref{eq:kappa} vanishes on these fixed points and hence the singular part $(G\backslash M,d_{\kappa_{\mathcal H}})^\sing$ is isometric to $U(2)$ with the bi-invariant metric. For $r = 1$ the set of fixed points (which again all have full stabilizer $G$) is a round $3$-sphere by the results in Subsection \ref{subsec:quotients_spheres}. From Proposition \ref{prop:not_orbifold} we conclude that $G\backslash M$ is an orbifold if and only if $r\ge 3$.

Since we already know from Lemma \ref{lem:not_product_sphere} that the examples for the case $r=1$ can be chosen not to be non-trivial products of Alexandrov spaces, we are left to analyze the case $r=2$. The term Alexandrov space now again refers to the general case (Definition \ref{def:as}).

\begin{lemma}\label{lem:not_product_stiefel}
For $r=2$ the Alexandrov spaces from Proposition \ref{prop:isospec_alexandrov} cannot be written as non-trivial products of Alexandrov spaces.
\end{lemma}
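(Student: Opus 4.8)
The plan is to imitate the proof of Lemma \ref{lem:not_product_sphere} as closely as possible, adapting the curvature and dimension bookkeeping to the Stiefel setting. First I would observe that, exactly as in Lemma \ref{lem:not_product_sphere}, we may rescale the entire isospectral family $j(t)$, $t\in I$, by a common small positive constant so that all metrics $g_{\kappa_{\mathcal H}(t)}$ are $C^\infty$-close to $g_0$; since the $U(s)$-left-invariant, $U(r)$-right-invariant metric $g_0$ on $V_2(\C^{m+2})$ has positive sectional curvature (being, up to scaling, a normal homogeneous metric on a compact homogeneous space with finite fundamental group — or, if one prefers a cleaner argument, by a direct computation), the rescaled $(M,g_{\kappa_{\mathcal H}(t)})$ also have positive sectional curvature, and then O'Neill's formula forces the regular part $(S^1\backslash M,d_{g_{\kappa_{\mathcal H}(t)}})^\reg$ to have positive sectional curvature as well. (If positivity of $g_0$ is not available for all $r$, one restricts attention to $r=2$, which is the only case at issue here, and checks positivity there; this is the one routine verification I would want to pin down.)

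Next, suppose for contradiction that for some $t_0\in I$ we have $(S^1\backslash M,d_{g_{\kappa_{\mathcal H}(t_0)}})=A_1\times A_2$ as metric spaces, with $A_1,A_2$ non-trivial Alexandrov spaces. As recalled in the proof of Lemma \ref{lem:not_product_sphere}, the regular part of a product is the product of the regular parts (cf. \cite[Proposition 78]{plaut}), so
\[
U(2)=(S^1\backslash M)^\sing = A_1^\sing\times A_2 \ \cup\ A_1\times A_2^\sing .
\]
Here $\dim(S^1\backslash M)=2\cdot 2\cdot(m+2-2)-1=4m-1\ge 11$, while $(S^1\backslash M)^\sing\cong U(2)$ is $4$-dimensional. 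As in the sphere case, the codimension of the singular set being large (namely $4m-5\ge 7>\dim U(2)$-ish bookkeeping) forces one factor — say $A_1$, without loss of generality — to have empty singular set, i.e.\ to be a Riemannian manifold, whence $U(2)=A_1\times A_2^\sing$. Since $U(2)$ is itself a manifold, $A_2^\sing$ (a convex subset of $U(2)$, hence itself an Alexandrov space) must also be a manifold; and since $A_1$ and $A_2^\sing$ are honest manifolds whose product is $U(2)$, one of them must be a point. If $A_2^\sing$ is a point then $A_1=U(2)$, so $(S^1\backslash M,d_{g_{\kappa_{\mathcal H}(t_0)}})=U(2)\times A_2$; but then $A_2^\reg$ would carry a flat factor (the tangent directions to $U(2)$), contradicting that $(S^1\backslash M)^\reg$ has positive sectional curvature — unless $A_2^\reg$ is a point, which is impossible since $\dim(S^1\backslash M)^\reg=4m-1\ge 11>\dim U(2)$. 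The remaining possibility is $A_1$ a point, contradicting the non-triviality of $A_1$. Either way we reach a contradiction.

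The main obstacle I anticipate is purely the verification that the ambient metric $g_0$ on $V_2(\C^{m+2})$ (equation \eqref{eq:stiefel-g0}, which is explicitly \emph{not} the normal homogeneous metric) has positive sectional curvature — or, more cautiously, that it can be deformed within the allowed family so that the quotient's regular part has positive curvature. Everything else is the same dimension-counting and convexity argument used for the sphere case. One should double-check the numerology $\dim(S^1\backslash M)=4m-1$ against the constraint $m\ge 3$ (so the quotient is at least $11$-dimensional and $\dim U(2)=4$ is indeed of codimension $\ge 7$), which is what makes the "one factor is smooth, then it is a point" dichotomy go through cleanly. If the curvature computation for $g_0$ itself proves awkward, the fallback is to note that the horizontalized forms $\kappa_{\mathcal H}(t)$ all vanish at the $H$-fixed frames, so near the singular stratum the metrics agree with $g_0$, and a compactness argument localizes the needed positivity; but I would expect the direct homogeneous-space computation to suffice.
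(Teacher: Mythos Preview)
Your argument has a genuine gap at the curvature step. The metric $g_0$ on $V_2(\C^{m+2})$ given by \eqref{eq:stiefel-g0} does \emph{not} have strictly positive sectional curvature. First, the paper explicitly warns that $g_0$ is \emph{not} the normal homogeneous metric; second, even normal homogeneous metrics on compact homogeneous spaces with finite fundamental group are only nonnegatively curved in general, not positively. Concretely, the right $U(2)$-orbits $Q\cdot U(2)$ are totally geodesic in $(V_2(\C^{m+2}),g_0)$ (e.g.\ the orbit through the base frame is the fixed-point set of the isometric $U(m)$-action on the top block), and with the induced metric they are isometric to $U(2)$ with its bi-invariant metric, which has zero-curvature planes through the center direction. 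Hence $g_0$ has zero-curvature $2$-planes at every point, small perturbations $g_{\kappa_{\mathcal H}(t)}$ inherit planes of curvature close to zero, and O'Neill's formula gives no strict positivity on $(S^1\backslash M)^\reg$. Your fallback (``the forms vanish near the singular stratum, so the metric agrees with $g_0$ there'') does not help, since $g_0$ itself already fails to be positively curved. So the final contradiction---ruling out the splitting $U(2)\times A_2$ by positive curvature of the regular part---does not go through.

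The paper abandons curvature at exactly this point and uses a cohomological argument instead. After reducing (as you do) to $A_1=U(2)$---where even the indecomposability of the singular set $U(2)$ needs a separate argument, since $U(2)$ is not strictly positively curved---the paper computes $H^1_{S^1}(V_2(\C^{m+2});\R)=0$ via the Borel fibration $V_2(\C^{m+2})\times_{S^1}S^\infty\to\CP^\infty$ (both base and fibre are simply connected), and then uses $H^1_{S^1}(M;\R)\cong H^1(S^1\backslash M;\R)$ together with K\"unneth and $H^1(U(2);\R)\cong\R$ to contradict a product decomposition $S^1\backslash M=U(2)\times A_2$. This topological route is what replaces the unavailable positive-curvature obstruction.
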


\begin{proof}
Suppose that there is $t_0\in I$ such that $(S^1\backslash V_2(\C^{m+2}), d_{\kappa_{\mathcal H}(t_0)}) = A_1 \times A_2$ with $A_1$ and $A_2$ non-trivial Alexandrov spaces. Arguing as in the second and third paragraph of the proof of Lemma \ref{lem:not_product_sphere}, we can without loss of generality assume that $A_1$ is a manifold and obtain $A_1=U(2)$. (Note that the argument from the end of the third paragraph is also applicable here, since $U(2) = (S^1\backslash V_2(\C^{m+2}))^\sing$ endowed with the bi-invariant metric cannot be written as a non-trivial product of Riemannian manifolds: Though it does not have strictly positive sectional curvature, the sectional curvatures of $U(2)$ permit only the decomposition $U(2) = S^1 \times N$, with $S^1$ embedded diagonally in $U(2)$. But then $-Id \in U(2)$ can be reached by a geodesic in $S^1$ and also by a geodesic in $N$, both starting at $Id \in U(2)$---a contradiction.)

Recall that, with $EG\to BG$ denoting the classifying space of $G$, the equivariant cohomology groups are defined for an arbitrary $G$-space $X$ as $H^\ast_G(X) := H^\ast(X \times_G EG)$ and that we have a fibration $X \times_G EG \to BG$ with fiber $X$. In our case this is the fibration $V_2(\C^{m+2}) \times_{S^1} S^\infty \to \CP^\infty$ with fiber $V_2(\C^{m+2})=M$ and so, using the long exact sequence for homotopy groups, we get that the space $V_2(\C^{m+2}) \times_{S^1} S^\infty$ is simply connected, since the fiber and the base are. That means $H^1_{S^1}(V_2(\C^{m+2}); \R) = 0$.

On the other hand (see, e.g., \cite[Example C.8 in the appendix]{equivariant_cohomology_book}) there is an isomorphism $H^1_{S^1}(V_2(\C^{m+2}); \R) \cong H^1(S^1\backslash V_2(\C^{m+2}); \R)$. Since $S^1\backslash V_2(\C^{m+2}) = U(2) \times A_2$ and $H^1(U(2); \R) \cong \R$, we get by the K\"{u}nneth formula the non-triviality of $H^1(S^1\backslash V_2(\C^{m+2}); \R) \cong H^1_{S^1}(V_2(\C^{m+2}); \R)$---a contradiction.
\end{proof}

Propositions \ref{prop:isospec_alexandrov} and \ref{prop:not_orbifold} and Lemma \ref{lem:not_product_stiefel} now show that the examples in this section are isospectral families as announced in the remark in our introduction.

\section{Non-isometry}\label{sec:non_isom}

In this section we will show that the examples from Section \ref{subsec:quotients_spheres} are pairwise non-isometric if the maps $j$ and $j^\prime$ are chosen to be non-equivalent and one of them generic (cf. Definition \ref{definition:isomaps}).

\subsection{General theory}
\label{subsection:gen-noniso}
To give a nonisometry criterion for our examples we first recall the notations and remarks from \cite[2.1]{MR1895349}, applied to our special case of the connected $T$-invariant manifold $G\backslash M_\princ$ from Section \ref{section:isometrics}:

\begin{notations}
\label{notations:nonisolist}
\begin{enumerate}[(i)]
\item \label{notations:nonisolisti} A diffeomorphism $F\co G\backslash M_{\text{princ}}\to G\backslash M_{\text{princ}}$ is called \emph{$T$-preserving} if conjugation by $F$ preserves $T\subset\Diffeo(G\backslash M_{\text{princ}})$, i.e., $c^F(z):=F\circ z\circ F^{-1}\in T~\forall z\in T$. In this case we denote by $\Psi_F:=c^F_*$ the automorphism of ${\mathfrak t}=T_eT$ induced by the isomorphism $c^F$ on $T$. Obviously, each $T$-preserving diffeomorphism $F$ of $G\backslash M_{\text{princ}}$ maps $T$-orbits to $T$-orbits; in particular, $F$ preserves $G\backslash \widehat{M}$. Moreover, it is straightforward to show $F_*Z^*={\Psi_F(Z)}^*$ on $G\backslash \widehat{M}$ for all $Z\in\mathfrak{t}$.

\item \label{notations:nonisolist-Aut} Denote by $\Aut^T_{h_0}(G\backslash M_{\text{princ}})$ the group of all $T$-preserving diffeomorphisms of $G\backslash M_{\text{princ}}$ which, in addition, preserve the $h_0$-norm of vectors tangent to the $T$-orbits in $G\backslash \widehat{M}$ and induce an isometry of the Riemannian manifold $(T\backslash (G\backslash \widehat{M}),h_0^T)$. We denote the corresponding group of induced isometries by $\overline{\Aut}^T_{h_0}(G\backslash M_{\text{princ}})\subset \Isom(T\backslash (G\backslash \widehat{M}),h^T_0)$.

\item \label{notations:nonisolist-D} Define ${\mathcal D} := \{\Psi_F;~F\in\Aut^T_{h_0}(G\backslash M_{\text{princ}})\}\subset\Aut({\mathfrak t})$. Note that ${\mathcal D}$ is discrete and each of its elements preserves the lattice ${\mathcal L}=\ker(\exp\co{\mathfrak t}\to T)$.

\item \label{notations:nonisolist-iv} Let $\omega_0\co T(G\backslash \widehat{M})\to {\mathfrak t}$ denote the connection form on the principal $T$-bundle $\pi\co G\backslash \widehat{M}\to T\backslash(G\backslash \widehat{M})$ associated with $h_0$; i.e., the unique connection form such that for each $[x]\in G\backslash \widehat{M}$ the kernel of ${\omega_0}_{|T_{[x]}G\backslash \widehat{M}}$ is the $h_0$-orthogonal complement of the vertical space ${\mathfrak t}_{[x]}=\{Z^*_{[x]};~Z\in{\mathfrak t}\}$ in $T_{[x]}(G\backslash \widehat{M})$. The connection form on $G\backslash \widehat{M}$ associated with $h_\lambda$ is then easily seen to be given by $\omega_\lambda:=\omega_0+\lambda$.

\item Let $\Omega_\lambda$ denote the curvature form on the manifold $T\backslash (G\backslash \widehat{M})$ associated with the connection form $\omega_\lambda$ on $G\backslash \widehat{M}$. Since $G$ is abelian, we have $\pi^*\Omega_\lambda=d\omega_\lambda$.

\item \label{notations:nonisolist-vi} Since $\lambda$ is $T$-invariant and $T$-horizontal, it induces a ${\mathfrak t}$-valued 1-form $\overline{\lambda}$ on $T\backslash (G\backslash \widehat{M})$. Then $\pi^*\Omega_\lambda=d\omega_\lambda=d\omega_0+d\lambda$ implies $\Omega_\lambda=\Omega_0+d\overline{\lambda}$.
\end{enumerate}
\end{notations}

Using these notations, we now have the following criterion for non-isometry:

\begin{proposition}
\label{proposition:nonisometry}
Let $\kappa,\kappa^\prime$ be $\mathfrak{t}$-valued admissible 1-forms on $M$, denote the induced 1-forms on $G\backslash M_\princ$ by $\lambda,\lambda^\prime$ and assume that $\Omega_\lambda,\Omega_{\lambda^\prime}$ have the following two properties:
\begin{itemize}
\item[(N)] \label{proposition:nonisometry:N} $\Omega_\lambda\notin {\mathcal D}\circ {\ol{\Aut}_{g_0}^T}(G\backslash M_\princ)^*\Omega_{\lambda^\prime}$
\item[(G)] \label{proposition:nonisometry:G} No nontrivial 1-parameter group in $\ol{\Aut}_{g_0}^T(G\backslash M_\princ)$ preserves $\Omega_{\lambda^\prime}$.
\end{itemize}
Then the Alexandrov spaces $(G\backslash M,d_{g_\lambda})$ and $(G\backslash M,d_{g_{\lambda^\prime}})$ are not isometric.
\end{proposition}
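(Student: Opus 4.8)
The strategy is to argue by contraposition: assume there is an isometry $\Phi\co (G\backslash M,d_{g_\lambda})\to (G\backslash M,d_{g_{\lambda^\prime}})$ and derive that condition (N) fails, using (G) to rule out the degenerate case. The first step is to pass from the metric isometry $\Phi$ to a diffeomorphism of regular parts. Since isometries of Alexandrov spaces preserve the stratification by regularity (and in particular the regular part $G\backslash M_\princ$, together with its continuous Riemannian metric by Proposition \ref{proposition:alex-mf}), $\Phi$ restricts to a Riemannian isometry $(G\backslash M_\princ,h_\lambda)\to(G\backslash M_\princ,h_{\lambda^\prime})$. The second step is to show that, after composing with a suitable element coming from $\mathcal D$, this isometry can be taken to be $T$-preserving: the subtorus $T\subset\Isom(G\backslash M_\princ,h_\lambda)$ must be carried to a conjugate of $T\subset\Isom(G\backslash M_\princ,h_{\lambda^\prime})$; since both metrics $h_\lambda$, $h_{\lambda^\prime}$ restrict to $h_0$ on the $T$-orbit directions and induce $h_0^T$ downstairs (Notations \ref{notations:one_forms}(\ref{notations:one_forms_v})), an averaging/rigidity argument over the compact group identifies $\Phi$ (up to an element of $\overline{\Aut}^T_{g_0}$, which is exactly the ambiguity packaged in $\mathcal D$) with a $T$-preserving diffeomorphism $F$ lying in $\Aut^T_{g_0}(G\backslash M_\princ)$.

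The third step is the core computation. Because $F$ is $T$-preserving with $\Psi:=\Psi_F\in\mathcal D$ and $F^*h_{\lambda^\prime}=h_\lambda$, one pulls back the connection form: $F$ intertwines the connection forms $\omega_{\lambda^\prime}$ and $\omega_\lambda$ up to the automorphism $\Psi$ of $\frakt$, i.e. $F^*\omega_{\lambda^\prime}=\Psi\circ\omega_\lambda$, exactly as in the manifold computation of \cite[2.1]{MR1895349}. Taking exterior derivatives and descending to $T\backslash(G\backslash\widehat M)$ via the induced isometry $\bar F\in\overline{\Aut}^T_{g_0}(G\backslash M_\princ)$ then yields
\[
\bar F^*\Omega_{\lambda^\prime}=\Psi\circ\Omega_\lambda,
\]
that is, $\Omega_\lambda\in\mathcal D\circ\overline{\Aut}^T_{g_0}(G\backslash M_\princ)^*\Omega_{\lambda^\prime}$, contradicting (N). Here condition (G) is what licenses the passage in Step 2 from ``$\Phi$ conjugates $T$ into a conjugate of $T$'' to ``$\Phi$ may be replaced by a genuinely $T$-preserving map'': without (G) the centralizer of $T$ in the isometry group could be positive-dimensional, and one could not pin down $F$ inside the discrete set $\mathcal D$ — (G) guarantees the relevant stabilizer is discrete so that the correction lies in $\overline{\Aut}^T_{g_0}$ and no continuous ambiguity survives.

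The main obstacle is Step 2: controlling how an abstract isometry interacts with the torus action on the \emph{non-compact, non-complete} manifold $G\backslash\widehat M$ (or $G\backslash M_\princ$), and in particular justifying that the image of the $T$-action is again (conjugate to) the standard $T$-action rather than some other torus in the isometry group. This is handled in \cite{MR1895349} by exploiting that the isometry extends to the metric completion $G\backslash M$ and that the singular stratum and the orbit structure are isometry invariants; I would follow that argument essentially verbatim, noting only that all the steps there use nothing about $M$ being a manifold that is not equally available here, since $G\backslash M_\princ$ with $h_0$ is a genuine Riemannian manifold and $(G\backslash M,d_{g_\lambda})$ is a genuine compact metric space whose regular part it is. The remaining steps (1) and (3) are formal once (2) is in place.
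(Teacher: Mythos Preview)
Your outline matches the paper's approach: the paper likewise says that the argument from \cite[Section~3.2]{weilandt10a} (which in turn follows \cite{MR1895349}) carries over almost verbatim once one knows two facts --- that isometries preserve the open regular part, and that the isometry group of the ambient compact Alexandrov space is a compact \emph{Lie} group.

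There is, however, one genuine gap in your Step~2. You write that the conjugation-of-tori argument ``is handled in \cite{MR1895349} by exploiting that the isometry extends to the metric completion $G\backslash M$'' and that nothing there uses that the ambient space is a manifold. But the Sch\"uth argument does use something: it uses that the full isometry group is a compact Lie group, so that maximal tori exist and any two are conjugate. For a closed Riemannian manifold this is Myers--Steenrod; for a compact metric space you get compactness of $\Isom$ for free, but \emph{not} Lie structure. The paper's proof singles out precisely this point, citing \cite{galaz12} for the fact that the isometry group of a compact Alexandrov space is a compact Lie group. Once you have that, your Steps~1--3 go through exactly as you sketch; without it, the phrase ``$T$ must be carried to a conjugate of $T$'' is not justified, and your appeal to \cite{MR1895349} does not cover it.
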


\begin{proof}
Since the isometry group of a compact Alexandrov space is a compact Lie group (see \cite{galaz12} and the references therein) and the regular part $G\backslash M_\princ$ of $G\backslash M$ is open, the arguments given in \cite[Section 3.2]{weilandt10a} (based on \cite{MR1895349}) carry over almost literally from orbifolds to our Alexandrov spaces.
\end{proof}

\subsection{The non-isometry of quotients of spheres}
We now apply the criteria from Section \ref{subsection:gen-noniso} to our examples from Section \ref{subsec:quotients_spheres} with the arguments actually being similar to the non-isometry proof in \cite[Section 4.3]{weilandt10a}. Recall that we consider the sphere $M=S^{2m+3}\subset\C^{m+2}$ with the round metric $\langle,\rangle$ and that $G=S^1$ acts on the first $m$ components whereas $T=S^1\times S^1$ acts on the last two. 

As in the examples from \cite{weilandt10a} we have
\[\widehat{S^{2m+3}}=\{(u,v)\in S^{2m+3};~u\ne 0\wedge v_1\ne 0\wedge v_2\ne 0\}.\]
Moreover, for $a\in (0,1/\sqrt{2})$ we consider the subset
\[S_a:=\{(u,v)\in S^{2m+3};~|v_1|=|v_2|=a\}\subset \widehat{S^{2m+3}}.\]
For the following proposition recall from Definition \ref{definition:isomaps}(\ref{definition:isomaps2}) that (with $Z_1=(i,0)$, $Z_2=(0,i)$) we had set $\mathcal{E}=\{\phi\in\Aut({\mathfrak t});~\phi(Z_k)\in\{\pm Z_1,\pm Z_2\}\text{ for }k=1,2\}$ Then (with $\mathcal{D}$ defined in Notation \ref{notations:nonisolist}(\ref{notations:nonisolist-D})) we have:

\begin{lemma}
  \[\mathcal{D}\subset\mathcal{E}\]
\end{lemma}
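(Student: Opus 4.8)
The plan is to show that every automorphism $\Psi_F$ arising from $F\in\Aut^T_{h_0}(G\backslash M_\princ)$ permutes (up to sign) the two coordinate vectors $Z_1,Z_2$ of $\mathfrak{t}$, which is exactly the defining condition of $\mathcal{E}$. Since $\mathcal{D}$ is already known to be discrete, the real content is to rule out automorphisms of $\mathfrak{t}\cong\R^2$ that genuinely mix the two $S^1$-factors. First I would unwind what data the $T$-action on $G\backslash M_\princ=G\backslash\widehat{S^{2m+3}}$ provides: the vertical vector fields $Z_1^*,Z_2^*$ generate the two circle actions on $v_1$ and $v_2$, and for $F\in\Aut^T_{h_0}$ one has $F_*Z^*=\Psi_F(Z)^*$ (Notation \ref{notations:nonisolist}(\ref{notations:nonisolisti})) together with the fact that $F$ preserves the $h_0$-norm of $T$-vertical vectors and induces an isometry of $(T\backslash(G\backslash\widehat{M}),h_0^T)$.

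The key geometric input I would use is the function $[x]\mapsto(\|Z_1^*_{[x]}\|_{h_0}, \|Z_2^*_{[x]}\|_{h_0})$. On the round sphere, $Z_k^\#_{(u,v)}=(0,\dots,iv_k,\dots)$ has norm $|v_k|$, and since the $G$-orbits are perpendicular to the $T$-orbits and tangent to the $u$-sphere, the quotient norm $\|Z_k^*_{[u,v]}\|_{h_0}$ is still $|v_k|$ (the $Z_k^\#$ are already $G$-horizontal). So the pair of vertical-norm functions recovers the invariants $|v_1|,|v_2|$ on $G\backslash\widehat{M}$. Because $F$ preserves $h_0$-norms of vertical vectors and $F_*Z_k^*=\Psi_F(Z_k)^*$, we get for every $[x]$ that $\|\Psi_F(Z_k)^*_{F[x]}\|_{h_0}=\|Z_k^*_{[x]}\|_{h_0}$; writing $\Psi_F(Z_k)=a Z_1+b Z_2$ this says $\sqrt{a^2|v_1(F[x])|^2+b^2|v_2(F[x])|^2}$ — wait, one must be careful: $\|(aZ_1+bZ_2)^*\|^2 = a^2\|Z_1^*\|^2+b^2\|Z_2^*\|^2$ because $Z_1^*,Z_2^*$ are $h_0$-orthogonal (they act on orthogonal coordinate planes), so this equals $a^2|v_1|^2+b^2|v_2|^2$ evaluated at $F[x]$. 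Letting $[x]$ range over $S_a$ (where $|v_1|=|v_2|=a$) and then over points where $|v_1|\to 1,|v_2|\to0$ and vice versa — using continuity/density and the fact that $F$ is a homeomorphism of $G\backslash\widehat{M}$ whose image is dense in the sphere quotient — I would extract the constraint that $\Psi_F$ must send each of $\{Z_1,Z_2\}$ to a vector all of whose "mass" sits on a single coordinate; i.e. $\Psi_F(Z_k)=\pm Z_{\tau(k)}$. This forces $\Psi_F\in\mathcal{E}$.

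The main obstacle, and the step I would spend the most care on, is making the limiting/extension argument rigorous: $F$ is only defined on $G\backslash M_\princ$, and the coordinate functions $|v_1|,|v_2|$ behave on the boundary of $\widehat{M}$, so I cannot simply "plug in" $v=(1,0)$ or $v=(0,1)$. The clean way is to argue entirely with the identity $a^2|v_1|^2+b^2|v_2|^2 = \|Z_1^*\|^2$ as functions on the open dense set $G\backslash\widehat{M}$ (for the $\Psi_F(Z_1)=aZ_1+bZ_2$ case, composed with $F^{-1}$ to move everything to a common point): this is an equality of real-analytic functions of $(|v_1|^2,|v_2|^2)$, which range over an open subset of $\{s_1>0,s_2>0,s_1+s_2\le 1\}$ after pushing through the diffeomorphism $F$, hence the coefficients must match and $(a,b)$ must be $(\pm1,0)$ or $(0,\pm1)$. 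An alternative, if one prefers to avoid analyticity, is to note that $\|Z_1^*\|^2=|v_1|^2$ already takes a continuum of values on $S_a$-type slices as $a$ varies while $F$ shuffles them, and a $2\times2$ matrix $\Psi_F$ that is not a signed permutation cannot preserve this precise quadratic form for all slices. Either route handles the obstacle; the rest is the routine bookkeeping that a signed-permutation automorphism lies in $\mathcal{E}$ by definition.
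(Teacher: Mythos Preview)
Your computation that $Z_1^*,Z_2^*$ are $h_0$-orthogonal with $\|Z_k^*_{[x]}\|_{h_0}=|v_k|$ is correct, and so is the identity $\|\Psi_F(Z_k)^*_{F[x]}\|_{h_0}=\|Z_k^*_{[x]}\|_{h_0}$. The gap is in the step where you conclude that ``the coefficients must match''. Writing $\Psi_F(Z_1)=aZ_1+bZ_2$, the identity you obtain is
\[
a^2\,|v_1(F[x])|^2+b^2\,|v_2(F[x])|^2 \;=\; |v_1([x])|^2,
\]
and the two sides live at different points. After composing with $F^{-1}$ the right-hand side becomes $|v_1|^2\circ F^{-1}$, which is \emph{not} a known polynomial in $(|v_1|^2,|v_2|^2)$; there is no reason for it to be a function of $(|v_1|^2,|v_2|^2)$ at all until you have already controlled how $F$ moves the slices. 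So no coefficient comparison is available, and the limiting argument fails for the same reason: you do not know what $|v_k(F[x])|$ does as $|v_1([x])|\to 0$. In fact the norm equations alone (together with the companion equation for $Z_2$) are satisfied by the one-parameter family $\Psi_F(Z_1)=\beta Z_2$, $\Psi_F(Z_2)=\beta^{-1}Z_1$ for any $\beta\neq 0$, with $|v_1(F[x])|=|\beta|\,|v_2([x])|$ and $|v_2(F[x])|=|\beta|^{-1}|v_1([x])|$; norm preservation by itself does not single out $\beta=\pm 1$.

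What is missing is either (a) the lattice constraint $\Psi_F\in GL(\mathcal L)\cong GL(2,\Z)$, which you allude to via discreteness of $\mathcal D$ but never actually use in the argument, or (b) some independent control over how $F$ acts on the moment-type data $(|v_1|,|v_2|)$. The paper supplies (b): it first shows (citing the analogous lemma in the orbifold paper) that each diagonal slice $S^1\backslash S_a=\{|v_1|=|v_2|=a\}$ is $F$-invariant. On such a slice every $T$-orbit is a \emph{square} flat torus of side $2\pi a$, and the closed geodesics of minimal length $2\pi a$ are exactly the flow lines of $Z_1^*$ and $Z_2^*$. Since $F$ restricts to an isometry between $T$-orbits, it must send these shortest loops to shortest loops, forcing $F_*Z_k^*\in\{\pm Z_1^*,\pm Z_2^*\}$ and hence $\Psi_F\in\mathcal E$. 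This geodesic-loop argument simultaneously encodes the lattice and the metric information that your coefficient-matching attempt could not separate.
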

\begin{proof}
  For $[x]=[(u,v)]\in S^1\backslash \widehat{S^{2m+3}}$ we let $R^{[x]}$ denote the embedding
\[T=S^1\times S^1\ni (\sigma_1,\sigma_2)\mapsto(\sigma_1,\sigma_2)[x]\in S^1\backslash \widehat{S^{2m+3}}\]
and calculate $\langle {Z_j^*}_{[x]},{Z_k^*}_{[x]}\rangle = \langle R^{[x]}_*Z_j,R^{[x]}_*Z_k\rangle = \delta_{jk}|v_j|^2$ for $j,k\in\{1,2\}$. In particular, $Z_1^*$ and $Z_2^*$ are orthogonal vector fields on $S^1\backslash \widehat{S^{2m+3}}$ and the area of the orbit $T[x]\subset S^1\backslash \widehat{S^{2m+3}}$ is $4\pi^2|v_1||v_2|$. If $F\in\Aut^T_{g_0}(S^1\backslash S^{2m+3}_{\text{princ}})$, then an argument analogous to the one in \cite[Lemma 4.12]{weilandt10a} shows that for $a\in(0,1/\sqrt{2})$ the quotient $S^1\backslash S_a\subset S^1\backslash \widehat{S^{2m+3}}$ is $F$-invariant (and obviously $T$-invariant).

Observe that for $x\in S_a$ the pull-back of $\langle,\rangle$ via $R^{[x]}$ is $a^2$ times the standard metric on $T=S^1\times S^1$ and hence the flow lines generated by $Z_1^*$ and $Z_2^*$ through $[x]$ give precisely the geodesic loops in $T[x]\subset S^1\backslash S_a$ of length $2\pi\|{Z_1^*}_{[x]}\|=2\pi a$. Since $F$ preserves $S^1\backslash S_a$, the geodesic loops in $TF([x])=F(T[x])$ through $F([x])$ of length $2\pi a$ are given precisely by the flow lines of $Z_1^*$ and $Z_2^*$ through $F([x])$. Since $F\co T[x]\to F(T[x])$ is an isometry, this implies ${F_*}_{[x]}({Z_j^*}_{[x]})\in\{\pm {Z_1^*}_{F([x])},\pm {Z_2^*}_{F([x])}\}$ and hence $\mathcal{D}\subset\mathcal{E}$
\end{proof}

We can now use the lemma above to show the following proposition, which together with Proposition \ref{proposition:nonisometry} gives a criterion for non-isometry. Its proof basically follows the proof of \cite[Proposition 4.14]{weilandt10a}.

\begin{proposition}

\label{proposition:NG}
  Let $j,j^\prime\co\R^2\to\mathfrak{su}(m)$ be two linear maps and let $\kappa,\kappa^\prime$ denote the associated admissible $\mathfrak{t}$-valued 1-forms on $S^{2m+3}$. If $\lambda,\lambda^\prime$ are the $T$-invariant and $T$-horizontal 1-forms on $S^1\backslash \widehat{S^{2m+3}}$ induced by $\kappa,\kappa^\prime$, then we have:
  \begin{enumerate}[(i)]
  \item If $j$ and $j^\prime$ are not equivalent in the sense of Definition \ref{definition:isomaps}(\ref{definition:isomaps2}), then $\Omega_\lambda$ and $\Omega_{\lambda^\prime}$ satisfy condition (N).
  \item If $j^\prime$ is generic in the sense of Definition \ref{definition:isomaps}(\ref{definition:isomaps3}), then $\Omega_{\lambda^\prime}$ has property (G).
  \end{enumerate}

\end{proposition}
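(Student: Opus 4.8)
The plan is to reduce both statements to concrete computations involving the curvature forms $\Omega_\lambda, \Omega_{\lambda'}$, which by Notation \ref{notations:nonisolist}(\ref{notations:nonisolist-vi}) differ from $\Omega_0$ by the exact forms $d\overline{\lambda}, d\overline{\lambda'}$. The key first step is to identify $d\overline{\lambda}$ explicitly in terms of $j$. Since $\kappa$ is given by \eqref{equation:kappa1} and $\lambda$ is the induced form on $S^1\backslash\widehat{S^{2m+3}}$, one computes $d\lambda$ (equivalently the curvature) on $\widehat{S^{2m+3}}$, and the standard computation from \cite[Section 3.2]{MR1895349} (carried out for the analogous sphere examples in \cite[Section 4.3]{weilandt10a}) shows that, restricted to a slice such as $S_a$, the curvature form $\Omega_\lambda - \Omega_0$ is determined by the pair $(j_{Z_1}, j_{Z_2})$ up to the conjugation and sign ambiguities encoded in $SU(m)\cup SU(m)\circ\conj$ and $\mathcal{E}$. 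I would state this as the main lemma of the proof: the restriction of $\Omega_\lambda$ to the relevant submanifold encodes $j$ up to equivalence in the sense of Definition \ref{definition:isomaps}(\ref{definition:isomaps2}).

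For part (i), I would argue by contrapositive. Suppose (N) fails, i.e. $\Omega_\lambda = \Psi^* \overline{F}^* \Omega_{\lambda'}$ for some $\Psi \in \mathcal{D}$ and some $\overline{F} \in \overline{\Aut}_{g_0}^T(S^1\backslash S^{2m+3}_\princ)$. By the preceding Lemma we have $\Psi \in \mathcal{E}$, so $\Psi$ contributes exactly a permutation-with-signs of $Z_1, Z_2$. The isometry $\overline{F}$ lifts to an $h_0$-isometry of $S^1\backslash\widehat{S^{2m+3}}$ preserving the $T$-orbit structure; restricting attention to the round geometry of the sphere and using that $\overline{F}$ must (after composing with a $T$-element) fix a chosen point of $S_a$ and act on the remaining $\C^m$-directions, one sees $\overline{F}_*$ acts on the relevant tangent directions by an element of $U(m)$ or $U(m)\circ\conj$; normalizing the determinant (which only rescales, hence does not affect the projectivized curvature data) puts it in $SU(m)\cup SU(m)\circ\conj$. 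Plugging these into the main lemma's formula for $\Omega_\lambda - \Omega_0 = \Omega_{\lambda'} - \Omega_0$ after transport yields exactly a relation $j_Z = A\, j'_{\Psi(Z)} A^{-1}$, i.e. $j$ and $j'$ are equivalent — contradiction. This mirrors \cite[Proposition 4.14]{weilandt10a} and \cite[Proposition 3.2.7]{MR1895349}.

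For part (ii), suppose a nontrivial one-parameter group $(\overline{F}_t)_{t\in\R}$ in $\overline{\Aut}_{g_0}^T(S^1\backslash S^{2m+3}_\princ)$ preserves $\Omega_{\lambda'}$. Its generator is a Killing field of $(T\backslash(S^1\backslash\widehat{S^{2m+3}}), h_0^T)$ which moreover preserves the orbit-norm data; lifting, $(\overline{F}_t)$ comes from a one-parameter subgroup acting on $S^{2m+3}$ commuting with $G$ and $T$, hence (again restricting to the round geometry) from a one-parameter subgroup of $U(m)$ acting on the first $m$ coordinates, with generator some $\xi \in \mathfrak{u}(m)$. Differentiating the invariance $\overline{F}_t^*\Omega_{\lambda'} = \Omega_{\lambda'}$ at $t=0$ and using the main lemma's identification of $\Omega_{\lambda'} - \Omega_0$ with the pair $(j'_{Z_1}, j'_{Z_2})$, the infinitesimal invariance becomes $[\xi, j'_{Z_1}] = 0 = [\xi, j'_{Z_2}]$ (up to subtracting a scalar so $\xi$ lies in $\mathfrak{su}(m)$). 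Genericity of $j'$ forces $\xi = 0$, so the one-parameter group is trivial — contradiction, giving (G).

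The main obstacle I expect is the first step: pinning down the precise formula for $\Omega_\lambda$ (or $d\overline{\lambda}$) on the chosen slice and verifying that the only ambiguities in reconstructing $j$ from it are exactly those in the definitions of equivalence and genericity. This requires care with the Riemannian submersion $S^{2m+3} \to S^1\backslash S^{2m+3}_\princ$, the $T$-bundle structure, and the normalization of connection forms; however, the computation is essentially the one already done in \cite[Section 4.3]{weilandt10a} for the orbifold case, and since our $\widehat{S^{2m+3}}$ and the relevant geometry on the regular part coincide with theirs, the arguments transfer with only notational changes — the extra $S^1 = G$ quotient having already been absorbed into the construction of $\lambda$ on $S^1\backslash\widehat{S^{2m+3}}$.
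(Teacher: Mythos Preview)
Your proposal is correct and follows essentially the same route as the paper: both argue by contrapositive, restrict to the slice $S_a$ (equivalently $L=S^1\backslash S_a$, with $T\backslash L\cong(\CP^{m-1},(1-2a^2)g_{\text{FS}})$), invoke the preceding lemma to get $\Psi\in\mathcal{E}$, identify the induced isometry of $T\backslash L$ with an element of $SU(m)\cup SU(m)\circ\conj$, and then read off the equivalence relation $j_{\Psi(Z)}=A^{-1}j'_Z A$ from the curvature data; part (ii) is likewise reduced to the commutator condition and genericity, exactly as in \cite[Proposition 4.14]{weilandt10a}. The one point the paper spells out that you leave implicit is the computation that $\Omega_0$ pulled back to $L$ vanishes: this is what justifies your line ``$\Omega_\lambda-\Omega_0=\Omega_{\lambda'}-\Omega_0$ after transport'', since without it you would need $\Psi\circ\overline{F}^*\Omega_0=\Omega_0$, which is not obvious globally but is trivially true on $L$ where both sides are zero.
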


\begin{proof}
  Given a $\mathfrak{t}$-valued differential form, we will use the superscripts $j=1, 2$ to denote its components with respect to the ordered basis $\{Z_1,Z_2\}$ of $\mathfrak{t}$. Recalling that $\xi\co \widehat{S^{2m+3}} \to S^1\backslash \widehat{S^{2m+3}}$ denotes the projection, we easily verify $(\xi^*\omega_0^j)_{(u,v)}(U,V)=\frac{\langle V_j,iv_j\rangle}{|v_j|^2}$ for $(u,v)\in \widehat{S^{2m+3}}$ and $(U,V)\in T_{(u,v)}\widehat{S^{2m+3}}\subset \C^{m+2}$. Now choose some arbitrary $a\in(0,1/\sqrt{2})$ and set $L:= S^1\backslash S_a$. With the superscript $L$ denoting the pullback of a differential form on $\widehat{S^{2m+3}}$ to $\xi^{-1}(L)=S_a$, we obtain $(\xi^*\omega_0^j)^L_{(u,v)}(U,V)=\frac{\langle V_j,iv_j\rangle}{a^2}$ for $(u,v)\in S_a$ and $(U,V)\in T_{(u,v)}S_a$ and calculate
  \[d(\xi^*\omega_0^j)^L_{(u,v)}((U,V),(\widetilde{U},\widetilde{V}))=\frac{1}{a^2}(\langle \widetilde{V}_j,iV_j\rangle-\langle V_j,i\widetilde{V}_j\rangle).\]
  Since $V_j, \widetilde{V}_j\in T_{v_j}S^1(a)$ are real multiples of $iv_j$, we conclude $d(\xi^*\omega_0^j)^L=0$ for $j=1, 2$ and therefore $d\omega_0^L=0$ (with the superscript $L$ now referring to the pull-back to $L$). Writing $\Omega_0^L$ for the $\mathfrak{t}$-valued 2-form on $T\backslash L$ induced by the curvature form $\Omega_0$ (see \ref{notations:nonisolist}), we conclude that $\Omega_0^L$ vanishes.

As in \cite{weilandt10a}, we first note that $T\backslash L$ is isometric to $(\CP^{m-1},(1-2a^2)g_{\text{FS}})$ (with $g_{\text{FS}}$ denoting the Fubini-Study metric). Using $\Omega_0^L=0$, the assumption that $\Omega_\lambda,\Omega_{\lambda^\prime}$ do not satisfy (N) then implies (via \ref{notations:nonisolist}(\ref{notations:nonisolist-vi}) and an argument analogous to \cite{weilandt10a}) that there is $\Psi\in\mathcal{D}\subset\mathcal{E}$ and $A\in SU(m)\cup SU(m)\circ \conj$ such that $d\kappa^L = \Psi\circ(A,I_2)^*d{\kappa^\prime}^L$ on $S_a=\xi^{-1}(L)$. The same calculation as in \cite{weilandt10a} finally shows $j_{\Psi(Z)} = A^{-1}j_Z^\prime A$ for all $Z\in\mathfrak{t}$; i.e., $j$ and $j^\prime$ are equivalent. The argument for (ii) is the same as in \cite[Proposition 4.14]{weilandt10a}.
\end{proof}

\noindent\footnotesize{{\bf Acknowledgements}
The first author's work was partly funded by the Studienstiftung des Deutschen Volkes, the graduate program TopMath of the Elite Network of Bavaria and the TopMath Graduate Center of TUM Graduate School at Technische Universit\"{a}t M\"{u}nchen. He is grateful to Bernhard Hanke for the guidance and support during the work on his Bachelor's thesis and his Master's degree.

The second author was partly funded by FAPESP and Funpesquisa-UFSC.

Both authors would like to thank Marcos Alexandrino, Dorothee Sch\"uth and the referee for various helpful suggestions.
}

\bibliographystyle{plain}

\begin{thebibliography}{10}

\bibitem{alexandrino09}
Marcos~M. Alexandrino and Renato~G. Bettiol.
\newblock {Introduction to Lie groups, isometric and adjoint actions and some
  generalizations}, 2009.
\newblock \href {http://arxiv.org/abs/0901.2374v3} {\path{arXiv:0901.2374v3}}.

\bibitem{alexandrino2013}
Marcos~M. Alexandrino, Rafael Briquet, and Dirk T{\"o}ben.
\newblock Progress in the theory of singular {R}iemannian foliations.
\newblock {\em Differential Geometry and its Applications}, 31(2):248 -- 267,
  2013.
\newblock \href {http://dx.doi.org/10.1016/j.difgeo.2013.01.004}
  {\path{doi:10.1016/j.difgeo.2013.01.004}}.

\bibitem{MR861271}
Pierre~H. B{\'e}rard.
\newblock {\em Spectral geometry: direct and inverse problems}, volume 1207 of
  {\em Lecture Notes in Mathematics}.
\newblock Springer-Verlag, Berlin, 1986.
\newblock With an appendix by G{\'e}rard Besson.

\bibitem{MR0282313}
Marcel Berger, Paul Gauduchon, and Edmond Mazet.
\newblock {\em Le spectre d'une vari\'et\'e riemannienne}.
\newblock Lecture Notes in Mathematics, Vol. 194. Springer-Verlag, Berlin,
  1971.

\bibitem{MR1835418}
Dmitri Burago, Yuri Burago, and Sergei Ivanov.
\newblock {\em A course in metric geometry}, volume~33 of {\em Graduate Studies
  in Mathematics}.
\newblock American Mathematical Society, Providence, RI, 2001.

\bibitem{MR1185284}
Yuri Burago, Mikhail Gromov, and Grigori~J. Perel{\cprime}man.
\newblock A.{D}. {A}leksandrov spaces with curvatures bounded below.
\newblock {\em Uspekhi Mat. Nauk}, 47(2(284)):3--51, 222, 1992.
\newblock \href {http://dx.doi.org/10.1070/RM1992v047n02ABEH000877}
  {\path{doi:10.1070/RM1992v047n02ABEH000877}}.

\bibitem{engel_bachelor}
Alexander Engel.
\newblock
Constructing isospectral manifolds.
\newblock Bachelor's thesis, Technische Universit\"at M\"unchen, 2010.
  \href{http://nbn-resolving.de/urn/resolver.pl?urn:nbn:de:bvb:91-diss-2012050%
9-1106557-0-8}{{urn:nbn:de:bvb:91-diss-20120509-1106557-0-8}}.

\bibitem{galaz12}
Fernando Galaz-Garcia and Luis Guijarro.
\newblock Isometry groups of {A}lexandrov spaces.
\newblock {\em Bulletin of the London Mathematical Society}, 2013.
\newblock \href {http://dx.doi.org/10.1112/blms/bds101}
  {\path{doi:10.1112/blms/bds101}}.

\bibitem{MR2985309}
Carolyn~S. Gordon.
\newblock Orbifolds and their spectra.
\newblock In {\em Spectral geometry}, volume~84 of {\em Proc. Sympos. Pure
  Math.}, pages 49--71. Amer. Math. Soc., Providence, RI, 2012.
\newblock \href {http://dx.doi.org/10.1090/pspum/084/1348}
  {\path{doi:10.1090/pspum/084/1348}}.
  
\bibitem{gordon2001}
Carolyn~S. Gordon.
\newblock Isospectral deformations of metrics on spheres.
\newblock {\em Invent. Math.}, 145(2):317--331, 2001.
\newblock \href {http://dx.doi.org/10.1007/s002220100150}
  {\path{doi:10.1007/s002220100150}}.

\bibitem{MR1736857}
Carolyn~S. Gordon.
\newblock Survey of isospectral manifolds.
\newblock In {\em Handbook of differential geometry, {V}ol. {I}}, pages
  747--778. North-Holland, Amsterdam, 2000.
\newblock \href {http://dx.doi.org/10.1016/S1874-5741(00)80009-6}
  {\path{doi:10.1016/S1874-5741(00)80009-6}}.

\bibitem{equivariant_cohomology_book}
Victor Guillemin, Viktor Ginzburg, and Yael Karshon.
\newblock \href{http://www.ma.huji.ac.il/~karshon/monograph/}{{\em {Moment
  Maps, Cobordisms, and Hamiltonian Group Actions}}}, volume~98 of {\em
  Mathematical Surveys and Monographs}.
\newblock American Mathematical Society, 2002.

\bibitem{MR1865418}
Kazuhiro Kuwae, Yoshiroh Machigashira, and Takashi Shioya.
\newblock Sobolev spaces, {L}aplacian, and heat kernel on {A}lexandrov spaces.
\newblock {\em Math. Z.}, 238(2):269--316, 2001.
\newblock \href {http://dx.doi.org/10.1007/s002090100252}
  {\path{doi:10.1007/s002090100252}}.

\bibitem{kuwae07}
Kazuhiro Kuwae and Takashi Shioya.
\newblock {Laplacian comparison for Alexandrov spaces}, 2007.
\newblock \href {http://arxiv.org/abs/0709.0788v1} {\path{arXiv:0709.0788v1}}.

\bibitem{MR932463}
Pierre Molino.
\newblock {\em Riemannian foliations}, volume~73 of {\em Progress in
  Mathematics}.
\newblock Birkh\"auser Boston Inc., Boston, MA, 1988.
\newblock Translated from the French by Grant Cairns, With appendices by
  Cairns, Y. Carri{\`e}re, {\'E}. Ghys, E. Salem and V. Sergiescu.

\bibitem{oneill}
Barrett O'Neill.
\newblock {The fundamental equations of a submersion}.
\newblock {\em Michigan Math. J.}, 13:459--469, 1966.

\bibitem{plaut}
Conrad Plaut.
\newblock {Metric Spaces of Curvature $\ge k$}.
\newblock In R.~J. Daverman and R.~B. Sher, editors, {\em {Handbook of
  Geometric Topology}}, chapter~16, pages 819--898. Elsevier Science B. V.,
  2002.

\bibitem{MR1895349}
Dorothee Schueth.
\newblock
  \href{http://projecteuclid.org/getRecord?id=euclid.jdg/1090348283}{Isospectr%
al metrics on five-dimensional spheres}.
\newblock {\em J. Differential Geom.}, 58(1):87--111, 2001.

\bibitem{MR2008331}
Dorothee Schueth.
\newblock {Constructing isospectral metrics via principal connections}.
\newblock In {\em Geometric analysis and nonlinear partial differential
  equations}, pages 69--79. Springer, Berlin, 2003.

\bibitem{weilandt10a}
Martin Weilandt.
\newblock \href{http://nyjm.albany.edu/j/2012/18-24.html}{{Isospectral metrics
  on weighted projective spaces}}.
\newblock {\em New York J. Math.}, 18:421--449, 2012.

\end{thebibliography}

\def\cprime{$'$}

\end{document}